\newcommand{\dx}{\mathrm{d}}
\newcommand{\coloneqq}{\colonequals}
\newcommand{\apply}[3][]{\left<#2,#3\right>_{#1}}
\newcommand{\scalar}[3][]{\left(#2\mid#3\right)_{#1}}
\renewcommand{\phi}{\varphi}
\newcommand{\e}{\mathrm{e}}
\newcommand{\setone}{\mathds{1}}
\newcommand{\eps}{\varepsilon}
\renewcommand{\rho}{\varrho}
\DeclareMathOperator{\Div}{div}
\DeclareMathOperator{\Real}{Re}
\DeclareMathOperator{\sgn}{sgn}
\newenvironment{smallpmatrix}{\left(\begin{smallmatrix}}{\end{smallmatrix}\right)}
\theoremstyle{definition}
\newtheorem{theorem}{Theorem}[section]
\newtheorem{proposition}[theorem]{Proposition}
\newtheorem{lemma}[theorem]{Lemma}
\newtheorem{remark}[theorem]{Remark}
\newtheorem{example}[theorem]{Example}
\newtheorem{convention}[theorem]{Convention}
\newtheorem{notation}[theorem]{Notation}
\title[Elliptic and Parabolic Regularity on Lipschitz Domains]{Regularity of Solutions of Linear Second Order Elliptic and Parabolic Boundary Value Problems on Lipschitz Domains}
\author{Robin Nittka}
\address{Robin Nittka\\University of Ulm\\Institute of Applied Analysis\\89069 Ulm\\Germany}
\email{robin.nittka@uni-ulm.de}
\keywords{Second order linear elliptic equations, Lipschitz domains, Robin boundary conditions, H\"older regularity,
	$L^\infty$-coefficients, parabolic equations, strongly continuous semigroups on $\mathrm{C}(\overline{\Omega})$,
	Wentzell-Robin boundary conditions}
\subjclass[2000]{Primary: 35B65; Secondary: 35J25, 35K20}
\numberwithin{equation}{section}
\begin{document}
\begin{abstract}
	For a linear, strictly elliptic second order differential
	operator in divergence form with bounded, measurable
	coefficients on a Lipschitz domain $\Omega$
	we show that solutions of the corresponding elliptic problem
	with Robin and thus in
	particular with Neumann boundary conditions are H\"older
	continuous for sufficiently $L^p$-regular right-hand sides.
	From this we deduce that the parabolic
	problem with Robin or Wentzell-Robin boundary conditions
	are well-posed on $\mathrm{C}(\overline{\Omega})$.
\end{abstract}

\maketitle

\section{Introduction}
In this article we show that solutions of elliptic
Robin boundary value problems on a Lipschitz domain
$\Omega \subset \mathds{R}^N$ are
H\"older regular if the right-hand side is smooth
enough in an $L^p$-sense.
In particular, this result applies to Neumann boundary conditions.

More precisely, let $L$ be a strictly elliptic operator
in divergence form
\begin{equation}\label{eq:diff_operator}
	Lu = -\sum_{j=1}^N D_j \Bigl( \sum_{i=1}^N a_{ij} D_iu + b_j u \Bigr) + \sum_{i=1}^N c_i D_iu + du.
\end{equation}
with bounded, measurable coefficients.
We consider elliptic problems that formally
take the form
\begin{equation}\label{eq:formal_problem}
	\left\{ \begin{aligned}
		Lu & = f_0 - \sum_{j=1}^N D_jf_j && \text{on } \Omega, \\
		\frac{\partial u}{\partial \nu_L} + \beta u & = g + \sum_{j=1}^N f_j \nu_j && \text{on } \partial\Omega,
	\end{aligned} \right.
\end{equation}
where we set
\[
	\frac{\partial u}{\partial \nu_L}
		\coloneqq \sum_{j=1}^N \Bigl( \sum_{i=1}^N a_{ij} D_iu + b_j u \Bigr) \nu_j,
\]
and $\nu$ denotes the outer normal on $\partial\Omega$.
We assume $\beta$ to be bounded and measurable, but make no assumptions
on the sign of $\beta$.

In Section~\ref{sec:prelim} we explain what it meant by a
weak solution of~\eqref{eq:formal_problem}.
Section~\ref{sec:elliptic} is devoted to $L^p$-regularity
and H\"older regularity results for solutions of~\eqref{eq:formal_problem},
which are summarized in Theorem~\ref{thm:robin_reg}.
The main idea is to extend weak solutions by reflection at the
boundary, to show that this extension again solves an elliptic
problem, and then to apply interior regularity results
due to de Giorgi, Nash, and Moser.
This strategy is known, see for example~\cite[Section~2.4.3]{Troi87}
or~\cite[Remark~3.10]{BassHsu91},
but it seems that until now it has not been exploited to this extent.

In particular, it will follow that if $f_0 \in L^{p/2}(\Omega)$,
$f_j \in L^p(\Omega)$, $j=1,\dots,N$ and $g \in L^{p-1}(\partial\Omega)$
for $p > N$, then every solution $u$ of~\eqref{eq:formal_problem}
is H\"older continuous on $\Omega$.
Weaker versions of this result can be found in~\cite{BassHsu91,Warma06,Daners09,AR97}.
On the other hand, a stronger version of this result
has been obtained in~\cite{GR01}, but by considerably more difficult methods.
Thus I believe that the simplicity of this approach
has still its own appeal.

Using the elliptic regularity result we attack parabolic
problems in spaces of continuous functions in Section~\ref{sec:parabolic}.
More precisely, we consider the initial value problems
\begin{equation}\label{eq:robin_parabolic}
	\left\{\begin{aligned}
		\dot{u}(t,x) & = -Lu(t,x), && t > 0, \; x \in \Omega, \\
		\tfrac{\partial u}{\partial \nu_L}(t,z) + \beta u(t,z) & = 0, && t \ge 0, \; z \in \partial\Omega, \\
		u(0,x) & = u_0(x), && x \in \Omega,
	\end{aligned}\right.
\end{equation}
i.e., Robin or Neumann boundary conditions,
and
\begin{equation}\label{eq:wentzell_parabolic}
	\left\{\begin{aligned}
		\dot{u}(t,x) & = -Lu(t,x) && t > 0, x \in \Omega, \\
		-Lu(t,z) + \tfrac{\partial u(t,z)}{\partial \nu_L} + \beta u(t,z) & = 0, && t > 0, z \in \partial\Omega, \\
		u(0,x) & = u_0(x), && x \in \Omega,
	\end{aligned}\right.
\end{equation}
i.e., Wentzell-Robin boundary conditions.
These equations give rise to strongly continuous semigroups
in appropriate Hilbert spaces. These semigroups have extensively
been studied, see for example~\cite{AtE97,Daners00,Daners00_robin,AW03,AMPR03,CFGGR08}.
In special cases, it is known that the solution operators for these equations
define strongly continuous semigroups also in the space of continuous functions
on $\Omega$, see~\cite{BassHsu91,FT95,FGGR02,Engel03,Warma06}.
We extend these results to the case of arbitrary strongly elliptic
differential operators with bounded, measurable coefficients.

For simplicity we consider second order linear equations only.
We will work with bounded,
real-valued coefficients and pure Robin boundary
conditions, i.e., we do not allow for Dirichlet or mixed
boundary conditions.
We will not investigate whether the operators
generate semigroups on spaces of H\"older continuous
functions. In the generation results for Wentzell-Robin
boundary conditions we will in addition assume that
the first order coefficient of the elliptic operator
is Lipschitz continuous.

Probably the methods and main ideas of this article still apply without
the above restrictions, and I will attempt to
generalize the theorems accordingly in a future publication.

I express my gratitude to my advisor Prof.\ Wolfgang
Arendt for suggesting the problem, hinting towards the methods
I used, and in general for his valuable advises.

\section{Preliminaries}\label{sec:prelim}
In the whole article $\Omega$ will always denote a Lipschitz regular
subset of $\mathds{R}^N$, i.e., $\Omega$ is an open, bounded set that
is locally the epigraph of a Lipschitz regular function.
When we work with Lebesgue spaces $L^p(\partial\Omega)$, we always
equip $\partial\Omega$ with the natural surface measure, which coincides
the $(N-1)$-dimensional Hausdorff measure.
Since $\Omega$ is Lipschitz regular, there exists a bounded trace
operator from $H^1(\Omega)$ to $L^2(\partial\Omega)$, and
we denote the trace of $u \in H^1(\Omega)$ by $u|_{\partial\Omega}$
or simply by $u$, if misunderstandings are not to be expected.

We consider linear differential operators $L$ in divergence form
acting on functions on $\Omega$, i.e., $L$ is (formally) given by~\eqref{eq:diff_operator}.
We assume throughout that the coefficients $a_{ij}$, $b_j$, $c_i$ and $d$
are bounded and measurable and that $L$ is strictly elliptic, i.e.,
that there exists $\alpha > 0$ such that
\begin{equation}\label{eq:elliptic}
	\sum_{i,j=1}^N a_{ij}(x) \xi_i \xi_j \ge \alpha |\xi|^2
\end{equation}
holds for all $\xi \in \mathds{R}^N$ and almost every $x$ in $\Omega$.
Moreover, we restrict ourselves to the case $N \ge 2$ for simplicity.

For $L$ as in~\eqref{eq:diff_operator} and
$\beta \in L^\infty(\partial\Omega)$, define the bilinear form $a_{L,\beta}$ via
\begin{equation}\label{eq:robin_form}
	\begin{aligned}
		a_{L,\beta}(u,v) &
			\coloneqq \sum_{i,j=1}^N \int_\Omega a_{ij} D_iu D_jv \; \dx\lambda
				+ \sum_{j=1}^N \int_\Omega b_j u D_jv \; \dx\lambda \\
			& \qquad + \sum_{i=1}^N \int_\Omega c_i D_iu v \; \dx\lambda + \int_\Omega duv \; \dx\lambda
				+ \int_{\partial\Omega} \beta u v \; \dx\sigma
	\end{aligned}
\end{equation}
for $u$ and $v$ in $H^1(\Omega)$.

Given functions $f_j \in L^1(\Omega)$, $j=1,\dots,N$, and $g \in L^1(\partial\Omega)$,
we consider $u \in H^1(\Omega)$ that satisfy
\begin{equation}\label{eq:robin_problem}
	a_{L,\beta}(u,v) = \int_\Omega f_0 v \; \dx\lambda + \sum_{j=1}^N \int_\Omega f_j D_jv \; \dx\lambda + \int_{\partial\Omega} g v \; \dx\sigma
	\quad \text{ for all } v \in \mathrm{C}^1(\overline{\Omega}).
\end{equation}
This corresponds formally to the elliptic Robin problem~\eqref{eq:formal_problem}
with a distributional right-hand side.

If~\eqref{eq:robin_problem} holds maybe not for all $v \in \mathrm{C}^1(\overline{\Omega})$,
but at least for all smooth functions
$v \in \mathrm{C}^\infty_c(\Omega)$ with compact support in $\Omega$,
we say that $u \in H^1(\Omega)$ solves the problem
$Lu = f_0 - \sum_{j=1}^N D_jf_j$. Note that this condition does not depend on $\beta$.

In the proofs, we will frequently need Sobolev embedding theorems, which can be found for example
in Grisvard's book~\cite[Theorems~1.5.1.3 and~1.4.4.1]{Grisvard85}.

\section{Elliptic Problems}\label{sec:elliptic}
\subsection{Neumann boundary conditions}
In this section we consider~\eqref{eq:robin_problem} in the special
case $\beta = 0$, i.e., elliptic problems with Neumann boundary conditions.
We will see that for sufficiently regular right hand sides, every
solution admits a H\"older continuous representative.

Let $\Omega \subset \mathds{R}^N$ be Lipschitz regular.
By definition, for every $z \in \partial\Omega$ we can choose
an orthogonal matrix $\mathcal{O}$, a radius $r > 0$,
a Lipschitz continuous function $\psi\colon \mathds{R}^{N-1} \to \mathds{R}$ and
\[
	G \coloneqq \left\{ (y, \psi(y) + s) : y \in B(0,r) \subset \mathds{R}^{N-1}, s \in (-r,r) \right\}
\]
such that
\[
	\mathcal{O}(\Omega - z) \cap G = \left\{ (y, \psi(y) + s) : y \in B(0,r) \subset \mathds{R}^{N-1}, s \in (0,r) \right\}.
\]

\begin{convention}
	Since the assumptions of Section~\ref{sec:prelim} are invariant
	under isometric transformations of $\mathds{R}^N$,
	for local considerations
	we may without loss of generality assume that $\mathcal{O} = I$ and $z=0$.
\end{convention}

Define $T(y,s) \coloneqq (y, \psi(y) + s)$ for $y \in \mathds{R}^{N-1}$ and $s \in \mathds{R}$.
Then $T$ is a bi-Lipschitz mapping from $B(0,r) \times (-r,r)$ to $G$ with derivative
\[
	T'(y,s) = \begin{pmatrix} I & 0 \\ \nabla \psi(y) & 1 \end{pmatrix}
	\quad \text{and} \quad
	T'(y,s)^{-1} = \begin{pmatrix} I & 0 \\ -\nabla \psi(y) & 1 \end{pmatrix}
\]
almost everywhere.
Moreover, define the reflection $S\colon G \to G$ at the boundary
$\partial\Omega$ by $S(T(y,s)) \coloneqq T(y,-s)$.
Then
\[
	S'(T(y,s)) = T'(y,-s) \begin{pmatrix} I & 0 \\ 0 & -1 \end{pmatrix} T'(y,s)^{-1}
		= \begin{pmatrix} I & 0 \\ 2\nabla\psi(y) & -1 \end{pmatrix}
\]
almost everywhere.
Note that $S(Sx) = x$, $S'$ is bounded, $\det S'(x) = -1$ and $S'(x)^{-1} = S'(x)$.
Moreover, $S'(y,s)$ does not depend on $s$, whence $S'(Sx) = S'(x)$.

\begin{notation}
	We write $U$ for $G \cap \Omega$ and $V$ for $S(U) = G \setminus \overline{\Omega}$.
	For a function $w$ defined on $D \subset G$, define $w^\ast$ by 
	$w^\ast(x) \coloneqq w(Sx)$ on $S(D)$.
	If a function $w$ is defined on $U$, define $\tilde{w}$ on $G$ by
	\[
		\tilde{w}(x) \coloneqq \begin{cases} w(x), & x \in U, \\ w^\ast(x), & x \in V. \end{cases}
	\]
	on $G$.
	In the following it will not matter that $\tilde{w}$ is not defined on
	$\partial\Omega \cap G$ since we will apply this notation only to
	$L^p$-functions and the set has measure zero.

	For the rest of the section we fix a linear, strictly elliptic differential operator
	$L$ in divergence form and
	write $a$ for the matrix $(a_{ij})$ and $b$ and $c$ for the vectors
	$(b_j)$ and $(c_i)$, respectively.
	Moreover, define
	\begin{align*}
		\hat{a}(x) & \coloneqq
			\begin{cases}
				a(x), & x \in U, \\
				S'(x)a^\ast(x)S'(x)^T, & x \in V,
			\end{cases}
		&
		\hat{b}(x) & \coloneqq
			\begin{cases}
				b(x), & x \in U, \\
				S'(x)b^\ast(x), & x \in V,
			\end{cases}
		\\
		\hat{c}(x) & \coloneqq
			\begin{cases}
				c(x), & x \in U, \\
				S'(x)c^\ast(x), & x \in V.
			\end{cases}
	\end{align*}
\end{notation}

\begin{lemma}\label{lemma:prop}\mbox{}
	\begin{enumerate}[(i)]
	\item\label{lemma:prop:mirror}
		If $w$ is in $H^1(D)$, then $w^\ast$ is in $H^1(S(D))$, and $\nabla w^\ast(x) = \nabla w(Sx) S'(x)$ almost everywhere.
	\item\label{lemma:prop:trace}
		If $w$ is in $H^1(U)$, then $w|_{\partial U} = w^\ast|_{\partial V}$ on $\partial\Omega \cap G$.
	\item\label{lemma:prop:ext}
		If $w$ is in $H^1(U)$, then $\tilde{w}$ is in $H^1(G)$, and $\nabla \tilde{w} = \nabla w \, \setone_U + \nabla w^\ast \, \setone_V$.
	\item\label{lemma:prop:Lp}
		For any $p \in [1,\infty]$, the extension operator $w \mapsto \tilde{w}$ is continuous
		from $L^p(U)$ to $L^p(G)$.
	\item\label{lemma:prop:Linf}
		The functions $\hat{a}$, $\hat{b}$, $\hat{c}$ and $\tilde{d}$ are measurable and bounded on $G$.
	\end{enumerate}
\end{lemma}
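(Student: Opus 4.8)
The proof of Lemma~\ref{lemma:prop} is a sequence of computations with the explicit change of variables $T$ and the reflection $S$, none individually deep; the work is to organize them so that each part follows cleanly, ideally using the earlier parts. I would prove the five items roughly in the stated order, since~\eqref{lemma:prop:ext} builds on~\eqref{lemma:prop:mirror} and~\eqref{lemma:prop:trace}, while~\eqref{lemma:prop:Lp} and~\eqref{lemma:prop:Linf} are essentially corollaries of the formulas established in~\eqref{lemma:prop:mirror} once one records that $S'$ is bounded with $|\det S'|=1$.

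For~\eqref{lemma:prop:mirror}, I would first verify the chain rule for smooth $w$ by direct differentiation of $w^\ast = w \circ S$, getting $\nabla w^\ast(x) = \nabla w(Sx)\,S'(x)$ (row-vector convention), and then pass to general $w \in H^1(D)$ by density of $\mathrm{C}^\infty$ in $H^1$ together with the fact that $S$ is bi-Lipschitz with bounded Jacobian, so that $w \mapsto w^\ast$ is bounded on $L^2$ and the stated identity extends by continuity; that $w^\ast \in H^1(S(D))$ then follows because the right-hand side lies in $L^2(S(D))$. For~\eqref{lemma:prop:trace}, I would use that the trace commutes with the bi-Lipschitz boundary chart $T$: on $\partial\Omega \cap G$ we have $Sx = x$ (since $s=0$ is fixed by $S$), and continuity of the trace operator under the approximation of $w$ by smooth functions gives $w|_{\partial U} = w^\ast|_{\partial V}$ as elements of $L^2(\partial\Omega \cap G)$. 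For~\eqref{lemma:prop:ext}, the candidate gradient $g \coloneqq \nabla w\,\setone_U + \nabla w^\ast\,\setone_V$ is in $L^2(G)$ by~\eqref{lemma:prop:mirror}; I would show $\int_G \tilde w\, D_j \phi = -\int_G g_j \phi$ for all $\phi \in \mathrm{C}^\infty_c(G)$ by splitting the integral over $U$ and $V$, integrating by parts on each piece, and checking that the boundary terms on $\partial\Omega \cap G$ cancel — this is exactly where~\eqref{lemma:prop:trace} is needed, since the two traces agree and the outer normals to $U$ and $V$ along the common interface are opposite.

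Item~\eqref{lemma:prop:Lp} is the substitution rule: $\int_G |\tilde w|^p = \int_U |w|^p + \int_V |w^\ast|^p$, and the change of variables $x = Sy$ on the second integral together with $|\det S'| = 1$ gives $\int_V |w^\ast|^p = \int_U |w|^p$, so $\|\tilde w\|_{L^p(G)}^p = 2\|w\|_{L^p(U)}^p$ for $p<\infty$, and the $p=\infty$ case is immediate; linearity of $w \mapsto \tilde w$ is clear. For~\eqref{lemma:prop:Linf}, measurability of $\hat a,\hat b,\hat c,\tilde d$ follows from measurability of $a,b,c,d$, of $S$, and of $S'$ (which is bounded and depends only on $y$), and boundedness follows from the boundedness of the original coefficients together with $\|S'\|_{L^\infty} < \infty$; I would just note $|\hat a(x)| \le \|S'\|_\infty^2 \|a\|_\infty$ on $V$ and similarly for the others.

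The only mildly delicate point, and the one I would write out with care, is the cancellation of the interface boundary terms in~\eqref{lemma:prop:ext}: one must confirm that the pullback under $T$ turns the common boundary $\partial\Omega \cap G$ into $\{s=0\}$, that the two normals point in $\pm e_N$ directions there, and that the Jacobian factors from the change of variables on the $V$-side combine with $S'(x)^{-1}=S'(x)$ so that the trace of $w^\ast$ (which equals the trace of $w$ by~\eqref{lemma:prop:trace}) is integrated against the same surface measure from both sides — after which the terms cancel because of the opposite sign of the normal component. Everything else is bookkeeping with the explicit matrices already displayed before the lemma.
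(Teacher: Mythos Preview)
Your proposal is correct and follows essentially the same approach as the paper's proof: part~\eqref{lemma:prop:mirror} via the chain rule for Sobolev functions under a bi-Lipschitz change of variables (the paper cites \cite[Theorem~2.2.2]{Ziemer89} rather than arguing by density, but the content is the same), part~\eqref{lemma:prop:trace} by approximation with functions continuous up to the boundary, part~\eqref{lemma:prop:ext} by splitting the integral over $U$ and $V$, applying the divergence theorem on each piece, and using~\eqref{lemma:prop:trace} together with the opposite orientations of $\partial U$ and $\partial V$ along $\partial\Omega\cap G$ to cancel the interface terms, and parts~\eqref{lemma:prop:Lp}--\eqref{lemma:prop:Linf} from the change-of-variables formula and boundedness of $S'$. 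The one point to watch in your density argument for~\eqref{lemma:prop:mirror} is that for arbitrary open $D$ you should invoke the Meyers--Serrin theorem (smooth functions in $D$, not up to the boundary, are dense in $H^1(D)$), but this causes no difficulty.
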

\begin{proof}
	Assertion~\eqref{lemma:prop:mirror} follows from \cite[Theorem~2.2.2]{Ziemer89}.
	Assertion~\eqref{lemma:prop:trace} is obvious if $w$ is in addition continuous up to
	the boundary. Since $U$ is Lipschitz regular, those functions are dense in $H^1(U)$
	and the claim follows by approximation.
	Let $\phi$ be a test function on $G$. The divergence theorem~\cite[\S 4.3]{EG92}
	shows that
	\[
		\int_G \tilde{w} \; D_i\phi \; \dx\lambda
			= \int_{\partial U} w \phi \; \nu_i \; \dx\sigma - \int_U D_iw \; \phi \; \dx\lambda
				+ \int_{\partial V} w^\ast \phi \; \nu_i \; \dx\sigma - \int_V D_iw^\ast \; \phi \; \dx\lambda
	\]
	The boundary integrals cancel due to~\eqref{lemma:prop:trace} since the boundaries
	$\partial V$ and $\partial U$ have opposite orientations.
	This proves~\eqref{lemma:prop:ext}.
	Assertion~\eqref{lemma:prop:Lp} follows from~\cite[\S 3.4.3]{EG92}, and
	assertion~\eqref{lemma:prop:Linf} is obvious.
\end{proof}

\begin{lemma}\label{lemma:ext_elliptic}
	There exists a constant $\hat{\alpha} > 0$ such that $\xi^T \hat{a}(x)\xi \ge \hat{\alpha} |\xi|^2$
	for all $\xi \in \mathds{R}^N$ and almost every $x \in G$.
\end{lemma}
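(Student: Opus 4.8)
The plan is to handle the two pieces of $G$ separately. On $U$ we have $\hat a = a$, so there the desired estimate holds with constant $\alpha$ directly from the strict ellipticity~\eqref{eq:elliptic} of $L$. Thus everything reduces to bounding the quadratic form of $\hat a$ from below on $V$.

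Fix $\xi \in \mathds{R}^N$ and $x \in V$ and put $\eta \coloneqq S'(x)^T\xi$. A direct computation using the definitions of $\hat a$ on $V$ and of $a^\ast$ gives
\[
	\xi^T \hat a(x)\xi = \xi^T S'(x)a^\ast(x)S'(x)^T\xi = \eta^T a(Sx)\eta = \sum_{i,j=1}^N a_{ij}(Sx)\,\eta_i\eta_j.
\]
Since $S$ is bi-Lipschitz it maps Lebesgue null sets to Lebesgue null sets; as $Sx \in U \subset \Omega$, the ellipticity inequality~\eqref{eq:elliptic} therefore holds at $Sx$ for almost every $x \in V$, and for such $x$ we obtain $\xi^T\hat a(x)\xi \ge \alpha|\eta|^2 = \alpha|S'(x)^T\xi|^2$.

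It remains to bound $|S'(x)^T\xi|$ from below by a multiple of $|\xi|$. Since $S'(x)^{-1} = S'(x)$, taking transposes shows $(S'(x)^T)^{-1} = S'(x)^T$, so $\xi = S'(x)^T\bigl(S'(x)^T\xi\bigr)$ and hence $|\xi| \le \|S'(x)\|\,|S'(x)^T\xi|$. Because $S'$ is bounded, there is $M \ge 1$ with $\|S'(x)\| \le M$ for almost every $x$, and we conclude $\xi^T\hat a(x)\xi \ge (\alpha/M^2)|\xi|^2$ for almost every $x \in V$; on $U$ the same bound is trivial. Thus the claim holds with $\hat\alpha \coloneqq \alpha/M^2$. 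I do not expect a genuine obstacle here; the only slightly delicate points are that $S$ preserves null sets, so that ellipticity can be transported from $U$ to $V$, and that the lower bound $|S'(x)^T\xi| \ge |\xi|/M$ relies on $S'(x)$ being an involution, not merely bounded.
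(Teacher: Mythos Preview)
Your proof is correct, and it is actually simpler than the paper's. Both arguments agree on $U$; the difference is how the lower bound on $V$ is obtained.

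The paper symmetrizes $a$ and then argues that for any invertible $W$ and symmetric positive definite $M$, the matrix $WMW^T$ is again positive definite (via leading principal minors). It then invokes continuity of the least eigenvalue together with compactness of the ranges of $(a+a^T)/2$ and of $2\nabla\psi$ to extract a uniform lower bound $\delta$ for $\lambda_1(S'(x)\,\tfrac12(a+a^T)(Sx)\,S'(x)^T)$.

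You bypass all of this by exploiting the specific structure of the reflection: since $S'(x)^{-1}=S'(x)$, the transpose $S'(x)^T$ is its own inverse as well, and the single inequality $|\xi|\le \|S'(x)\|\,|S'(x)^T\xi|$ immediately converts the ellipticity constant $\alpha$ at $Sx$ into $\alpha/M^2$ at $x$. This gives an explicit constant $\hat\alpha=\alpha/M^2$ and avoids both the principal-minor computation and the compactness argument. The paper's route would generalize to transformations whose Jacobians are merely boundedly invertible rather than involutive, but in the present setting your argument is the cleaner one. Your remark about $S$ preserving null sets is exactly the point needed to transport the almost-everywhere ellipticity from $U$ to $V$; the paper leaves this implicit.
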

\begin{proof}
	Let $w \in \mathds{R}^{N-1}$ be an arbitrary row vector and define
	$W \coloneqq \begin{smallpmatrix} I & 0 \\ w & -1 \end{smallpmatrix}$.
	Given a positive definite matrix
	$M \coloneqq \begin{smallpmatrix} A & b \\ c & d \end{smallpmatrix} \in \mathds{R}^{N \times N}$,
	the matrix
	\[
		WMW^T = \begin{pmatrix} A & Aw^T-b \\ wA-c & wAw^T - wb - cw^T + d \end{pmatrix}
	\]
	is positive definite as well. In fact, it suffices to check that the
	leading principal minors are positive. Since $M$ is positive definite,
	all minors of $M$ are positive. Hence the first $N-1$ leading
	principal minors of $WMW^T$ are positive and, moreover, $\det M > 0$.
	Thus $\det(WMW^T) > 0$ by the multiplicativity of the determinant
	since $\det W = \det W^T = -1$, which proves the claim.
	
	By what we have shown, the least eigenvalue $\lambda_1(WMW^T)$ of $WMW^T$
	is positive whenever $M$ is positive definite.
	Since $\lambda_1$ depends continuously
	on the entries of the matrix this shows that $\lambda_1(WMW^T) \ge \delta$
	for some $\delta > 0$ as $M$ ranges over a compact subset of the set of
	all positive definite matrices, and $w$ ranges over a compact subset of $\mathds{R}^{N-1}$.

	Recall that a matrix $A \in \mathds{R}^{N \times N}$ satisfies
	$\xi^T A\xi \ge \alpha |\xi|^2$, $\alpha > 0$, for all $\xi \in \mathds{R}^N$
	if and only if $\lambda_1((A+A^T)/2) \ge \alpha$.
	Thus by assumption~\eqref{eq:elliptic}
	\[
		\tfrac{1}{2} \bigl(a(x) + a(x)^T\bigr) \in K_1 \coloneqq \left\{ M \in \mathds{R}^{N \times N} : M = M^T, \lambda_1(M) \ge \alpha, \|M\| \le c \right\}
	\]
	for some constant $c$ and for almost all $x \in U$.
	The set $K_1$ is a compact subset of the positive definite matrices.
	Let $K_2 \subset \mathds{R}^{N-1}$ be a closed ball whose radius is larger enough
	such that $2\nabla\psi(y) \in K_2$ for almost all $y$.

	Using the first part of this proof, we see that there is $\delta > 0$ such that
	\[
		\lambda_1\bigl(\tfrac{1}{2} S'(x) \bigl( a(Sx) + a(Sx)^T \bigr) S'(x)^T \bigr) \ge \delta
	\]
	for almost every $x \in U$. Thus $\xi^T \hat{a}(x) \xi \ge \delta |\xi|^2$
	almost everywhere on $V$, from which the claim follows with
	$\hat{\alpha} \coloneqq \min\{\alpha, \delta\}$.
\end{proof}

\begin{lemma}\label{lemma:ext_solution}
	Let $\hat{L}$ denote the differential operator on $G$
	for the coefficients $\hat{a}$, $\hat{b}$, $\hat{c}$ and $\tilde{d}$.
	Assume that there exists $p > N$ such that
	$f_0 \in L^{p/2}(\Omega)$, $f_j \in L^p(\Omega)$, $j=1,\dots,N$, and
	$g \in L^{p-1}(\partial\Omega)$.
	Assume that $u \in H^1(\Omega)$ is a solution of~\eqref{eq:robin_problem} (recall that we
	allow only for $\beta = 0$ in this section).
	Then there exist $s>N$ and functions $h_0 \in L^{s/2}$ and $h_j \in L^s$, $j=1,\dots,N$,
	that satisfy
	\begin{equation}\label{eq:interior_equation:final}
		a_{\hat{L},0}(\tilde{u},v) = \int_G h_0 v \, \dx\lambda + \sum_{i=1}^N \int_G h_i \; D_iv \; \dx\lambda
	\end{equation}
	for every function $v \in \mathrm{C}^\infty_c(G)$.
\end{lemma}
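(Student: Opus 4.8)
The plan is to test the reflected form $a_{\hat L,0}(\tilde u,\cdot)$ against an arbitrary $v\in\mathrm C^\infty_c(G)$, split the integral over $G$ into its parts over $U$ and $V$, and recognise the $V$-part, after the change of variables $x=Sy$, as the $U$-part applied to the mirrored test function $v^\ast$. The preliminary point is that $v$ and $v^\ast=v\circ S$, extended by zero to $\Omega$, are both admissible in~\eqref{eq:robin_problem}: $v$ is of class $\mathrm C^1(\overline\Omega)$ and vanishes off $G$, while $v^\ast$ is Lipschitz (since $S$ is), hence lies in $H^1(\Omega)$, and also vanishes off $G$ because $S$ maps a neighbourhood of $\partial G$ into one. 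Moreover~\eqref{eq:robin_problem}, a priori valid only for $v\in\mathrm C^1(\overline\Omega)$, extends to all $v\in H^1(\Omega)$ by density, since $a_{L,0}(u,\cdot)$ and the right-hand side are continuous on $H^1(\Omega)$; the latter holds because $f_0\in L^{p/2}$, $f_j\in L^{p}$ and $g\in L^{p-1}(\partial\Omega)$ all lie in $(H^1(\Omega))'$ when $p>N\ge2$.

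Write $a^U_L$ for~\eqref{eq:robin_form} with $\beta=0$ and all volume integrals restricted to $U$. The $U$-part of $a_{\hat L,0}(\tilde u,v)$ is $a^U_L(u,v)$, since on $U$ the coefficients of $\hat L$ coincide with those of $L$ and $\tilde u=u$. For the $V$-part I would substitute $x=Sy$, $y\in U$, insert the definitions of $\hat a,\hat b,\hat c$, apply Lemma~\ref{lemma:prop}\eqref{lemma:prop:mirror} to $\nabla\tilde u=\nabla u^\ast$ and to $\nabla v^\ast$, and use $\det S'=-1$, $S'(Sy)=S'(y)$ and $S'(x)^2=I$; the factors $S'$ cancel in pairs via $\sum_i S'_{ik}S'_{mi}=\delta_{mk}$, so the $V$-part collapses to $a^U_L(u,v^\ast)$, whence $a_{\hat L,0}(\tilde u,v)=a^U_L(u,v+v^\ast)$. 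Inserting $v$ and $v^\ast$ into~\eqref{eq:robin_problem} and using that $v^\ast=v$ on $\partial\Omega\cap G$ (because $S$ fixes the boundary) yields
\[
	a_{\hat L,0}(\tilde u,v)=\int_U f_0\,(v+v^\ast)\,\dx\lambda+\sum_{j=1}^N\int_U f_j\,D_j(v+v^\ast)\,\dx\lambda+2\int_{\partial\Omega\cap G}g v\,\dx\sigma .
\]
Substituting $x=Sy$ in the $v^\ast$-terms, and using $D_jv^\ast(y)=\sum_n D_nv(Sy)\,S'(y)_{nj}$, turns the $f_0$-integrals into $\int_G\tilde f_0 v\,\dx\lambda$ and the $f_j$-integrals into $\sum_{i=1}^N\int_G\hat f_i\,D_iv\,\dx\lambda$, where $\hat f$ is built from $f=(f_1,\dots,f_N)$ exactly as $\hat b$ is from $b$; by Lemma~\ref{lemma:prop}\eqref{lemma:prop:Lp} and the boundedness of $S'$ one has $\tilde f_0\in L^{p/2}(G)$ and $\hat f_i\in L^{p}(G)$.

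The remaining, and main, obstacle is the surface term $2\int_{\partial\Omega\cap G}gv\,\dx\sigma$, which must be rewritten as $\int_G h_0^g\,v\,\dx\lambda+\sum_i\int_G h_i^g\,D_iv\,\dx\lambda$ with $h_0^g\in L^{q_0}(G)$ and $h_i^g\in L^{q_1}(G)$ for some $q_0>N/2$, $q_1>N$. The naive device of extending $g$ constantly in the normal direction realises it as $\int_G\bar g\,D_Nv\,\dx\lambda$ with $\bar g\in L^{p-1}(G)$, but $p-1$ need not exceed $N$; and no extension $w$ with $\Tr w=g$ can have $\nabla w\in L^{q_1}$ for $q_1>2$, for then $g\in W^{1-2/q_1,q_1}(\partial\Omega)$, a space into which $L^{p-1}(\partial\Omega)$ does not embed. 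I would instead let $w$ be the Newtonian single layer potential $w(x)\coloneqq 2\int_{\partial\Omega}\Gamma(x-y)g(y)\,\dx\sigma(y)$, with $\Gamma$ the fundamental solution of $-\Delta$, so that $w$ is harmonic off $\partial\Omega$ and, for $v\in\mathrm C^\infty_c(G)$, Green's formula together with $-\Delta\Gamma=\delta_0$ gives
\[
	\sum_{i=1}^N\int_G (D_iw)\,D_iv\,\dx\lambda=\int_G\nabla w\cdot\nabla v\,\dx\lambda=-\int_G w\,\Delta v\,\dx\lambda=2\int_{\partial\Omega\cap G}g v\,\dx\sigma ;
\]
here $h_0^g=0$, which sidesteps the obstruction, and the crucial point is that $\nabla w$ — unlike a trace extension of $g$ — is a smoothing of $g$ of order $1-\eps$ rather than of order $-\eps$: on the Lipschitz hypersurface $\partial\Omega$ the layer-potential operators are bounded on $L^{p-1}$ of the surfaces parallel to $\partial\Omega$ uniformly in the distance $t$ to $\partial\Omega$, with an extra $L^\infty$-bound of order $t^{-(N-1)/(p-1)}$, so interpolating and integrating across the normal direction gives $\nabla w\in L^s(G)$ for every $s<\tfrac{N(p-1)}{N-1}$; and $\tfrac{N(p-1)}{N-1}>N$ precisely because $p>N$. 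Fixing such an $s$ and putting $s^\flat\coloneqq\min\{p,s\}>N$, one takes $h_0\coloneqq\tilde f_0\in L^{p/2}(G)\subseteq L^{s^\flat/2}(G)$ and $h_i\coloneqq\hat f_i+D_iw\in L^{p}(G)+L^{s}(G)\subseteq L^{s^\flat}(G)$ (as $G$ is bounded), and~\eqref{eq:interior_equation:final} follows with $s$ replaced by $s^\flat$. Everything but the surface term is routine bookkeeping; the genuine difficulty — and the reason the hypothesis reads $p>N$ — is the gain of a full unit of integrability when passing from the codimension-one datum $g\in L^{p-1}(\partial\Omega)$ to the codimension-zero coefficients $h_i^g$.
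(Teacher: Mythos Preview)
Your reflection-and-change-of-variables argument is essentially the paper's, and you correctly arrive at the same intermediate identity with the residual surface term $2\int_{\partial\Omega\cap G}gv\,\mathrm d\sigma$; the divergence is in how that term is absorbed. The paper does it by soft functional analysis: the trace embedding $W^{1,r}_0(G)\hookrightarrow L^{(N-1)r/(N-r)}(\partial U)$ for $r\in(1,N)$ makes $v\mapsto 2\int_{\partial U}gv\,\mathrm d\sigma$ a bounded functional on $W^{1,r_0}_0(G)$ with $r_0=\tfrac{(p-1)N}{(p-2)N+1}$, and the standard representation of $(W^{1,r_0}_0(G))'$ then yields $k_0,\dots,k_N\in L^{r_0'}(G)$ with $r_0'=\tfrac{(p-1)N}{N-1}>N$. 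Your single-layer-potential construction is correct and lands on exactly the same critical exponent, but it leans on the $L^q$ boundedness (for all $1<q<\infty$) of layer-potential operators on Lipschitz hypersurfaces---Coifman--McIntosh--Meyer and its higher-dimensional descendants---which is substantially deeper machinery than a trace inequality plus duality. What your approach buys is an explicit formula with $h_0^g=0$; what the paper's buys is self-containedness within standard Sobolev-space theory, in keeping with its stated aim of an elementary argument.
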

\begin{proof}\allowdisplaybreaks
	By definition of a solution of~\eqref{eq:robin_problem} we have that
	\begin{align*}
		& \sum_{i,j=1}^N \int_U \hat{a}_{ij} D_i\tilde{u} D_j v \; \dx\lambda
			+ \sum_{j=1}^N \int_U \hat{b}_j \tilde{u} D_jv \; \dx\lambda
			+ \sum_{i=1}^N \int_U \hat{c}_i D_i\tilde{u} v \; \dx\lambda
			+ \int_U \tilde{d} \tilde{u} v \; \dx\lambda \\
		& \quad = \int_U f_0 v \; \dx\lambda + \sum_{j=1}^N \int_U f_j D_jv \; \dx\lambda + \int_{\partial U} g v \; \dx\sigma
	\end{align*}
	holds for every $v \in \mathrm{C}^\infty_c(G)$.

	Using part~\eqref{lemma:prop:mirror} of Lemma~\ref{lemma:prop} and the change of
	variables formula~\cite[\S 3.4.3]{EG92}, replacing $x$ by $Sx$, we obtain
	\begin{align*}
		& \int_V (\nabla \tilde{u}) \hat{a} (\nabla v)^T \; \dx\lambda
			+ \int_V \tilde{u} (\nabla v) \hat{b} \; \dx\lambda
			+ \int_V (\nabla \tilde{u}) \hat{c} v \; \dx\lambda
			+ \int_V \tilde{d} \tilde{u} v \; \dx\lambda \\
		& = \int_V \nabla u(Sx) S'(x) S'(x) a(Sx) S'(x)^T (\nabla v(x))^T \dx x \\
			& \qquad + \int_V u(Sx) \nabla v(x) S'(x) b(Sx) \dx x
			+ \int_V \nabla u(Sx) S'(x) S'(x) c(Sx) v(x) \dx x \\
			& \qquad + \int_V d(Sx) u(Sx) v(x) \dx x \\
		& = \int_U \nabla u(x) a(x) S'(x)^T (\nabla v(Sx))^T \dx x
			+ \int_U u(x) \nabla v(Sx) S'(x) b(x) \dx x \\
			& \qquad + \int_U \nabla u(x) c(x) v(Sx) \dx x
			+ \int_U d(x) u(x) v(Sx) \dx x \\
		& = \int_U \nabla u(x) a(x) (\nabla v^\ast(x))^T \dx x
			+ \int_U u(x) \nabla v^\ast(x) b(x) \dx x \\
			& \qquad + \int_U \nabla u(x) c(x) v^\ast(x) \dx x
			+ \int_U d(x) u(x) v^\ast(x) \dx x \\
		& = \int_U f_0 v^\ast \; \dx\lambda + \sum_{j=1}^N \int_U f_j D_jv^\ast \; \dx\lambda + \int_{\partial U} g v^\ast \; \dx\sigma \\
		& = \int_U f_0(x) v(Sx) \dx x + \sum_{j=1}^N \int_U f_j(x) \sum_{i=1}^N D_iv(Sx) (S'(x))_{ij} \dx x + \int_{\partial U} g v^\ast \; \dx\sigma \\
		& = \int_V f_0^\ast v \; \dx\lambda + \sum_{i=1}^N \int_V \sum_{j=1}^N (S')_{ij} f_j^\ast \; D_i v \; \dx\lambda + \int_{\partial U} g v \; \dx\sigma,
	\end{align*}
	for every $v \in \mathrm{C}^\infty_c(\Omega)$,
	where we have used that $u$ is solution of~\eqref{eq:robin_problem}.

	Adding these two equations and defining
	$\hat{f}_j \in L^p(G)$ by
	\[
		\hat{f}_j(x) \coloneqq
			\begin{cases}
				f_j(x), & x \in U, \\
				\sum_{i=1}^N (S'(x))_{ji} f_i^\ast(x), & x \in V,
			\end{cases}
	\]
	we obtain
	\begin{equation}\label{eq:interior_equation:boundary}
		a_{\hat{L},0}(\tilde{u}, v)
			= \int_G \tilde{f}_0 v \; \dx\lambda + \sum_{j=1}^N \int_G \hat{f}_j D_jv \; \dx\lambda + 2 \int_{\partial U} gv \; \dx\sigma.
	\end{equation}

	Since $g$ is in $L^{p-1}(\partial U)$ and the trace operator
	is bounded from $W^{1,r}(G)$ to
	$L^{(N-1)r/(N-r)}(\partial U)$ for every $r \in (1,N)$
	the mapping
	\[
		\mathrm{C}^\infty_c(G) \to \mathds{R}, \; v \mapsto 2 \int_{\partial U} g v \; \dx\sigma
	\]
	extends to a continuous linear functional on $W^{1,r_0}_0(G)$
	for $r_0 \coloneqq \frac{(p-1)N}{(p-2)N + 1}$.
	Thus there exist functions $(k_j)_{j=0}^N$ in $L^{r_0'}(G)$
	such that
	\[
		2 \int_{\partial U} g v \; \dx\sigma = \int_G k_0 v \; \dx\lambda + \sum_{j=1}^N \int_G k_j D_jv \; \dx\lambda
	\]
	holds for every test function $v$, cf.~\cite[Theorem~4.3.3]{Ziemer89}.
	Note that by assumption $r_0' = \frac{(p-1)N}{N-1}$ is larger than $N$.
	Hence~\eqref{eq:interior_equation:final} follows from~\eqref{eq:interior_equation:boundary}
	by setting $h_0 \coloneqq \tilde{f}_0 + k_0$ and $h_j \coloneqq \hat{f}_j + k_j$ for $j=1,\dots,N$.
\end{proof}

\begin{proposition}\label{prop:neumann_Creg}
	Let $\Omega \subset \mathds{R}^N$ be Lipschitz regular, $p > N$.
	There exist $\gamma > 0$ and a constant $c$ with the following property.
	If $f_0 \in L^{p/2}(\Omega)$, $f_j \in L^p(\Omega)$, $j=1,\dots,N$, and $g \in L^{p-1}(\partial\Omega)$,
	then every solution $u$ of~\eqref{eq:robin_problem} (recall that at the moment we allow only for $\beta=0$)
	is in $\mathrm{C}^{0,\gamma}(\Omega)$ and satisfies
	\begin{equation}\label{eq:neumann_Creg:est}
		\|u\|_{\mathrm{C}^{0,\gamma}(\Omega)}
			\le c \bigl(\|u\|_{L^2(\Omega)} + \|f_0\|_{L^{p/2}(\Omega)} + \sum_{j=1}^N \|f_j\|_{L^p(\Omega)} + \|g\|_{L^{p-1}(\partial\Omega)}\bigr).
	\end{equation}
\end{proposition}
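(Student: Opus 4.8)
The plan is to localize everything and reduce to the classical interior regularity estimate of De Giorgi, Nash and Moser. We shall use it in the following form: if $M$ is a strictly elliptic divergence-form operator with bounded measurable coefficients on an open set $W\subset\mathds R^N$, if $q>N$, $k_0\in L^{q/2}(W)$ and $k_j\in L^q(W)$ for $j=1,\dots,N$, and if $v\in H^1(W)$ satisfies $a_{M,0}(v,\phi)=\int_W k_0\phi\,\dx\lambda+\sum_{j=1}^N\int_W k_j\,D_j\phi\,\dx\lambda$ for all $\phi\in\mathrm C^\infty_c(W)$, then $v$ has a representative in $\mathrm C^{0,\gamma_0}_{\mathrm{loc}}(W)$ for some $\gamma_0>0$ depending only on $N$, $q$, the ellipticity constant and the $L^\infty$-bounds of the coefficients of $M$, and for every open $W'\Subset W$ one has $\|v\|_{\mathrm C^{0,\gamma_0}(W')}\le c_0\bigl(\|v\|_{L^2(W)}+\|k_0\|_{L^{q/2}(W)}+\sum_{j=1}^N\|k_j\|_{L^q(W)}\bigr)$, with $c_0$ depending in addition on $\operatorname{dist}(W',\partial W)$ and $\operatorname{diam}W$.

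Interior points are immediate: since \eqref{eq:robin_problem} holds in particular for all $v\in\mathrm C^\infty_c(\Omega)$, the function $u$ solves $Lu=f_0-\sum_j D_jf_j$ in the interior sense, so applying the above with $M=L$, $q=p$ and $W=B(x_0,2\rho)\Subset\Omega$ bounds $\|u\|_{\mathrm C^{0,\gamma}(B(x_0,\rho))}$ by the right-hand side of \eqref{eq:neumann_Creg:est}. For a boundary point $z\in\partial\Omega$ I would pass to the chart $G$, $T$, $S$, $U$, $V$ fixed in the preceding discussion. By Lemma~\ref{lemma:ext_elliptic} and part~\eqref{lemma:prop:Linf} of Lemma~\ref{lemma:prop} the reflected operator $\hat L$ is strictly elliptic on $G$ with bounded measurable coefficients; by parts~\eqref{lemma:prop:ext} and~\eqref{lemma:prop:Lp} of Lemma~\ref{lemma:prop} we have $\tilde u\in H^1(G)$ with $\|\tilde u\|_{L^2(G)}\le c\|u\|_{L^2(\Omega)}$; and by Lemma~\ref{lemma:ext_solution} there are $s>N$, $h_0\in L^{s/2}(G)$ and $h_j\in L^s(G)$ with $a_{\hat L,0}(\tilde u,v)=\int_G h_0 v\,\dx\lambda+\sum_j\int_G h_j\,D_jv\,\dx\lambda$ for all $v\in\mathrm C^\infty_c(G)$. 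Inspecting the construction of the $h_j$ in that proof — $\hat f_j$ is $f_j$ transported by the bounded matrix $S'$, while the $k_j$ represent the bounded functional $v\mapsto 2\int_{\partial U}gv\,\dx\sigma$ on $W^{1,r_0}_0(G)$ — one reads off $\|h_0\|_{L^{s/2}(G)}+\sum_j\|h_j\|_{L^s(G)}\le c\bigl(\|f_0\|_{L^{p/2}(\Omega)}+\sum_j\|f_j\|_{L^p(\Omega)}+\|g\|_{L^{p-1}(\partial\Omega)}\bigr)$ with $c$ depending only on the chart and on the coefficients. Feeding this into the interior estimate on $G$ gives $\tilde u\in\mathrm C^{0,\gamma_z}_{\mathrm{loc}}(G)$ and, on a smaller neighbourhood $G_z\ni z$, a bound for $\|\tilde u\|_{\mathrm C^{0,\gamma_z}(G_z)}$ by the right-hand side of \eqref{eq:neumann_Creg:est}; restricting to $G_z\cap\Omega$, where $\tilde u=u$, yields the same bound for $u$.

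It remains to patch these local estimates into a global one. The balls $B(x_0,\rho_{x_0})$, $x_0\in\Omega$, together with the sets $G_z$, $z\in\partial\Omega$, form an open cover of the compact set $\overline\Omega$; extract a finite subcover $W_1,\dots,W_K$ with associated Hölder exponents $\gamma_k$ and constants $c_k$, and set $\gamma\coloneqq\min_k\gamma_k$ (a bounded function that is $\mathrm C^{0,\gamma_k}$ on a bounded set is also $\mathrm C^{0,\gamma}$ there). Since each local estimate is with respect to the Euclidean distance, any two points $x,y\in\Omega$ lying in a common $W_k$ satisfy $|u(x)-u(y)|\le c_k|x-y|^\gamma$; if $\delta>0$ is a Lebesgue number of the cover, this holds (with the largest $c_k$) whenever $|x-y|<\delta$, while for $|x-y|\ge\delta$ one has $|u(x)-u(y)|\le 2\|u\|_\infty\le 2\delta^{-\gamma}\|u\|_\infty|x-y|^\gamma$, and $\|u\|_\infty$ is itself bounded by the right-hand side of \eqref{eq:neumann_Creg:est} through the local sup-estimates. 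This proves \eqref{eq:neumann_Creg:est}. I do not expect a genuine obstacle here: the real content is already in Lemmas~\ref{lemma:ext_elliptic} and~\ref{lemma:ext_solution}, and the only thing that needs a word of care is that the ellipticity constant $\hat\alpha$ and all the constants above can be chosen uniformly over the charts — which is automatic since $\Omega$ is a fixed bounded Lipschitz domain and only finitely many charts are used.
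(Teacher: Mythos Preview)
Your proof is correct and follows essentially the same route as the paper: reflect via Lemma~\ref{lemma:ext_solution}, invoke Lemmata~\ref{lemma:prop} and~\ref{lemma:ext_elliptic} to see that $\hat L$ is admissible, apply interior De Giorgi--Nash--Moser regularity on $G$, and then cover $\overline\Omega$ by finitely many charts together with an interior piece. You supply more detail than the paper does on two points---tracking the norm bounds for the $h_j$ through the construction in Lemma~\ref{lemma:ext_solution}, and the Lebesgue-number argument for gluing the local H\"older estimates into a global one---but these are elaborations of the same argument rather than a different approach.
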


\begin{proof}
	Fix $z$ and $G$ as at the beginning of this section.
	By Lemma~\ref{lemma:ext_solution} there exists $s > N$ and functions
	$h_0 \in L^{s/2}(G)$ and $h_j \in L^s(\Omega)$, $j=1,\dots,N$,
	such that the extension $\tilde{u} \in H^1(G)$ of $u$ solves the problem
	\[
		\hat{L}\tilde{u} = h_0 - \sum_{j=1}^N D_jh_j.
	\]
	By Lemmata~\ref{lemma:prop} and~\ref{lemma:ext_elliptic},
	the differential operator $\hat{L}$ on $G$ satisfies the
	assumptions of Section~\ref{sec:prelim}.
	Thus it follows from results about interior regularity~\cite[Theorem~8.24]{GT01}
	for every $\omega \Subset G$ there exists $\gamma_0 > 0$ such that
	$\tilde{u}$ is in $\mathrm{C}^{0,\gamma_0}(\omega)$ and satisfies an estimate
	of the same kind as~\eqref{eq:neumann_Creg:est}.
	Thus $u$ is in $\mathrm{C}^{0,\gamma}(\omega \cap \Omega)$ and satisfies
	an appropriate estimate.

	Since $\partial\Omega$ is compact, we can cover $\partial\Omega$ by
	finitely many such sets.
	Using the result about interior regularity once again to control
	$u$ in the remaining part of $\Omega$, the result follows.
\end{proof}

\subsection{Robin Boundary Conditions}
In this section we will apply Proposition~\ref{prop:neumann_Creg} to obtain
similar results also for Robin boundary conditions, i.e., for solutions
of~\eqref{eq:robin_problem} if $\beta$ does not necessarily equal $0$.
As a stepping stone, we investigate the $L^p$-regularity of these solutions
also in cases where the data satisfies fewer regularity
assumptions than in Proposition~\ref{prop:neumann_Creg}.
Thus even for $\beta=0$, the results of this section are more general
than those of the previous one.

As before, let $\Omega \subset \mathds{R}^N$ be Lipschitz regular
and $L$ be a linear, strictly elliptic differential operator on $\Omega$.
Moreover, let $\beta$ be an arbitrary function in $L^\infty(\partial\Omega)$.

For $\omega \in \mathds{R}$ we introduce the forms $a_{L,\beta}^\omega$, which are defined via
\begin{equation}\label{eq:robin_form:omega}
	a_{L,\beta}^\omega(u,v) \coloneqq a_{L,\beta}(u,v) + \omega \int_\Omega u v \; \dx\lambda
\end{equation}
for $u$ and $v$ in $H^1(\Omega)$.
We consider the functions $u \in H^1(\Omega)$ that satisfy
\begin{equation}\label{eq:robin_problem:omega}
	a_{L,\beta}^\omega(u,v) = \int_\Omega f_0 v \; \dx\lambda + \sum_{j=1}^N \int_\Omega f_j D_jv \; \dx\lambda + \int_{\partial\Omega} g v \; \dx\sigma
	\quad \text{ for all } v \in \mathrm{C}^1(\overline{\Omega}).
\end{equation}
This is a generalized version of~\eqref{eq:robin_problem},
and these two problems coincide for $\omega=0$.
The advantage of the more general form is that for large
$\omega$ the problem~\eqref{eq:robin_problem:omega} is uniquely solvable.

\begin{lemma}\label{lem:robin_laxmilgram}
	Let $N \ge 3$.
	There exist $\omega \in \mathds{R}$ and a constant $c$ with the following property.
	If $f_0 \in L^{2N/(N+2)}(\Omega)$, $f_j \in L^2(\Omega)$, $j=1,\dots,N$, and $g \in L^{2(N-1)/N}(\partial\Omega)$,
	then problem~\eqref{eq:robin_problem:omega} has a unique solution $u \in H^1(\Omega)$, and
	\begin{equation}\label{eq:robin_laxmilgram:est}
		\|u\|_{H^1(\Omega)}
			\le c \bigl( \|f_0\|_{L^{2N/(N+2)}(\Omega)} + \sum_{j=1}^N \|f_j\|_{L^2(\Omega)} + \|g\|_{L^{2(N-1)/N}(\partial\Omega)}\bigr).
	\end{equation}
\end{lemma}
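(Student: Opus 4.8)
The plan is to apply the Lax–Milgram lemma to the form $a_{L,\beta}^\omega$ on the Hilbert space $H^1(\Omega)$, with $\omega$ chosen large enough to absorb the problematic lower-order terms and the boundary term. First I would write down the right-hand side of~\eqref{eq:robin_problem:omega} as a functional and check it is bounded on $H^1(\Omega)$. For the volume terms this is immediate from Hölder: $f_j \in L^2(\Omega)$ pairs with $D_jv \in L^2(\Omega)$, and $f_0 \in L^{2N/(N+2)}(\Omega)$ pairs with $v$ via the Sobolev embedding $H^1(\Omega) \hookrightarrow L^{2N/(N-2)}(\Omega)$ (valid for $N\ge 3$), since $\tfrac{N+2}{2N}+\tfrac{N-2}{2N}=1$. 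For the boundary term I would use the trace embedding $H^1(\Omega)\hookrightarrow L^{2(N-1)/(N-2)}(\partial\Omega)$, whose conjugate exponent is exactly $2(N-1)/N$, so $g\in L^{2(N-1)/N}(\partial\Omega)$ gives a bounded functional. Collecting the embedding constants produces the estimate~\eqref{eq:robin_laxmilgram:est} once coercivity is in hand, since Lax–Milgram yields $\|u\|_{H^1}\le c'\|F\|_{(H^1)'}$.

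The core of the argument is coercivity of $a_{L,\beta}^\omega$ for suitable $\omega$. Starting from the ellipticity assumption~\eqref{eq:elliptic}, the principal part contributes $\ge\alpha\|\nabla u\|_{L^2}^2$. The terms involving $b$, $c$, $d$ are estimated by Cauchy–Schwarz together with Young's inequality with $\eps$: for instance $|\int_\Omega b_j u\,D_j u| \le \eps\|\nabla u\|_{L^2}^2 + C_\eps\|b\|_\infty^2\|u\|_{L^2}^2$, and similarly for $c$; the term $\int_\Omega d u^2$ is bounded by $\|d\|_\infty\|u\|_{L^2}^2$. The boundary term $\int_{\partial\Omega}\beta u^2$ requires care because $\beta$ has no sign: here I would invoke a standard trace interpolation inequality, namely that for every $\eps>0$ there is $C_\eps$ with $\|u\|_{L^2(\partial\Omega)}^2 \le \eps\|\nabla u\|_{L^2(\Omega)}^2 + C_\eps\|u\|_{L^2(\Omega)}^2$, which holds on Lipschitz domains. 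Choosing $\eps$ small enough that the $\|\nabla u\|_{L^2}^2$ losses sum to at most $\alpha/2$, we obtain
\[
	a_{L,\beta}^\omega(u,u) \ge \tfrac{\alpha}{2}\|\nabla u\|_{L^2(\Omega)}^2 + \bigl(\omega - M\bigr)\|u\|_{L^2(\Omega)}^2
\]
for some constant $M$ depending only on $\alpha$, $\|b\|_\infty$, $\|c\|_\infty$, $\|d\|_\infty$, $\|\beta\|_\infty$ and $\Omega$. Taking $\omega \coloneqq M + \tfrac{\alpha}{2}$ makes the form coercive on $H^1(\Omega)$ with coercivity constant $\alpha/2$; the form is also clearly bounded since all coefficients are in $L^\infty$ and the trace operator is bounded.

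The main obstacle is the sign-indefinite boundary term $\int_{\partial\Omega}\beta u^2$, and the remedy is precisely the compactness-type trace estimate $\|u\|_{L^2(\partial\Omega)}^2 \le \eps\|\nabla u\|_{L^2(\Omega)}^2 + C_\eps\|u\|_{L^2(\Omega)}^2$; without this one could only bound the boundary term by $C\|u\|_{H^1}^2$ with an uncontrolled constant, which would not be absorbed. This trace inequality follows from the compactness of the trace map $H^1(\Omega)\to L^2(\partial\Omega)$ on Lipschitz domains together with Ehrling's lemma, or directly from the continuity of the trace operator from $W^{1,r}(\Omega)$ into $L^2(\partial\Omega)$ for $r$ slightly below $2$ combined with interpolation; either way the reference is the Sobolev/trace theorems cited in Section~\ref{sec:prelim}. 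With coercivity and boundedness established, Lax–Milgram gives existence and uniqueness of $u\in H^1(\Omega)$ solving~\eqref{eq:robin_problem:omega} for every $v\in H^1(\Omega)$, hence in particular for $v\in\mathrm{C}^1(\overline\Omega)$, and the norm bound~\eqref{eq:robin_laxmilgram:est} is read off from the operator norm of the right-hand side functional computed in the first step.
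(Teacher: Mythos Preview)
Your proof is correct and follows essentially the same route as the paper: show the right-hand side defines a bounded functional on $H^1(\Omega)$ via the Sobolev and trace embeddings, establish coercivity of $a_{L,\beta}^\omega$ for large $\omega$, and apply Lax--Milgram. The only difference is that the paper outsources the coercivity step to a citation (\cite[Corollary~2.5]{Daners09}), whereas you spell it out directly via Young's inequality and the trace interpolation estimate $\|u\|_{L^2(\partial\Omega)}^2 \le \eps\|\nabla u\|_{L^2(\Omega)}^2 + C_\eps\|u\|_{L^2(\Omega)}^2$; this is exactly the content of that reference, so the arguments coincide.
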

\begin{proof}
	By~\cite[Corollary~2.5]{Daners09}
	there exist $\eta > 0$ and $\omega \in \mathds{R}$ such that
	\[
		a_{L,\beta}^\omega(u,u) \ge \eta \|u\|_{H^1(\Omega)}^2.
	\]
	Thus, by the Lax-Milgram theorem~\cite[Theorem~5.8]{GT01} there exists
	a constant $c_1$ with the following property.
	For every $\psi \in H^1(\Omega)'$
	there exists a unique function $u \in H^1(\Omega)$ that satisfies
	\begin{equation}\label{eq:robin_laxmilgram:repr}
		a_{L,\beta}^\omega(u,v) = \psi(v)
		\quad\text{for all } v \in H^1(\Omega),
	\end{equation}
	and for this $u$ we have
	\begin{equation}\label{eq:robin_laxmilgram:H1est}
		\| u \|_{H^1(\Omega)} \le c_1 \| \psi \|_{H^1(\Omega)'}.
	\end{equation}

	Since $H^1(\Omega)$ embeds into $L^{2N/(N-2)}(\Omega)$
	and the trace operator maps $H^1(\Omega)$
	into $L^{2(N-1)/(N-2)}(\partial\Omega)$,
	there exists a constant $c_2$ with the following property.
	For $f_0 \in L^{2N/(N+2)}(\Omega)$, $f_j \in L^2(\Omega)$, $j=1,\dots,N$,
	and $g \in L^{2(N-1)/N}(\partial\Omega)$,
	\begin{equation}\label{eq:robin_laxmilgram:func}
		\psi(v) \coloneqq 
			\int_\Omega f_0 v \; \dx\lambda + \sum_{j=1}^N \int_\Omega f_j D_jv \; \dx\lambda + \int_{\partial\Omega} g v \; \dx\sigma
	\end{equation}
	defines a continuous linear functional $\psi$ on $H^1(\Omega)$ that satisfies
	\begin{equation}\label{eq:robin_laxmilgram:funcest}
		\|\psi\|_{H^1(\Omega)'}
			\le c_2 \bigl(\|f_0\|_{L^{2N/(N+2)}(\Omega)} + \sum_{j=1}^N \|f_j\|_{L^2(\Omega)} + \|g\|_{L^{2(N-1)/N}(\partial\Omega)}\bigr).
	\end{equation}

	Now let $f_0 \in L^{2N/(N+2)}(\Omega)$, $f_j \in L^2(\Omega)$, $j=1,\dots,N$,
	and $g \in L^{2(N-1)/N}(\partial\Omega)$ be arbitrary.
	Define $\psi$ as in~\eqref{eq:robin_laxmilgram:func},
	and let $u$ be as in~\eqref{eq:robin_laxmilgram:repr}.
	Then $u$ is a solution of~\eqref{eq:robin_problem:omega}.
	Since $\mathrm{C}^1(\overline{\Omega})$ is dense in $H^1(\Omega)$,
	every solution of~\eqref{eq:robin_problem:omega} satisfies~\eqref{eq:robin_laxmilgram:repr}.
	Hence the solution of~\eqref{eq:robin_problem:omega} is unique.
	Estimate~\eqref{eq:robin_laxmilgram:est} follows with $c \coloneqq c_1c_2$
	by combining~\eqref{eq:robin_laxmilgram:H1est}
	and~\eqref{eq:robin_laxmilgram:funcest}.
\end{proof}

\begin{lemma}\label{lem:robin_laxmilgram2}
	Let $N=2$ and $q > 1$.
	There exist $\omega \in \mathds{R}$ and a constant $c$ with the following property.
	If $f_0 \in L^q(\Omega)$, $f_j \in L^2(\Omega)$, $j=1,\dots,N$, and $g \in L^q(\partial\Omega)$,
	then problem~\eqref{eq:robin_problem:omega} has a unique solution $u \in H^1(\Omega)$, and
	\begin{equation}\label{eq:robin_laxmilgram2:est}
		\|u\|_{H^1(\Omega)}
			\le c \bigl( \|f_0\|_{L^q(\Omega)} + \sum_{j=1}^N \|f_j\|_{L^2(\Omega)} + \|g\|_{L^q(\partial\Omega)}\bigr).
	\end{equation}
\end{lemma}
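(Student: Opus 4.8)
The plan is to follow the proof of Lemma~\ref{lem:robin_laxmilgram} line by line; the only change is that for $N=2$ the Sobolev embedding and trace theorems hold for a wider range of exponents, and this is exactly what allows the hypotheses on $f_0$ and $g$ to be weakened from a fixed critical exponent to an arbitrary $q>1$.

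First I would apply \cite[Corollary~2.5]{Daners09} to find $\eta>0$ and $\omega\in\mathds{R}$ with $a_{L,\beta}^\omega(u,u)\ge\eta\|u\|_{H^1(\Omega)}^2$ for all $u\in H^1(\Omega)$, and then the Lax--Milgram theorem \cite[Theorem~5.8]{GT01} to produce, for every $\psi\in H^1(\Omega)'$, a unique $u\in H^1(\Omega)$ with $a_{L,\beta}^\omega(u,v)=\psi(v)$ for all $v\in H^1(\Omega)$ and $\|u\|_{H^1(\Omega)}\le c_1\|\psi\|_{H^1(\Omega)'}$, where $c_1$ depends only on $\eta$.

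The core step is to show that, for data as in the statement, the functional
\[
	\psi(v)\coloneqq\int_\Omega f_0 v\;\dx\lambda+\sum_{j=1}^N\int_\Omega f_j D_jv\;\dx\lambda+\int_{\partial\Omega}g v\;\dx\sigma
\]
is bounded on $H^1(\Omega)$ with $\|\psi\|_{H^1(\Omega)'}\le c_2\bigl(\|f_0\|_{L^q(\Omega)}+\sum_{j=1}^N\|f_j\|_{L^2(\Omega)}+\|g\|_{L^q(\partial\Omega)}\bigr)$. The terms $\int_\Omega f_j D_jv$ are estimated directly by the Cauchy--Schwarz inequality. For $\int_\Omega f_0 v$ I would use H\"older's inequality with exponents $q$ and $q'$ together with the Sobolev embedding $H^1(\Omega)\hookrightarrow L^{q'}(\Omega)$, which for $N=2$ holds for every finite exponent by \cite[Theorem~1.4.4.1]{Grisvard85} and is applicable precisely because $q>1$ forces $q'<\infty$. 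For $\int_{\partial\Omega}g v$ I would likewise combine H\"older's inequality on $\partial\Omega$ with the boundedness of the trace operator from $H^1(\Omega)$ into $L^{q'}(\partial\Omega)$, which again holds for all finite $q'$ when $N=2$ by \cite[Theorem~1.5.1.3]{Grisvard85}.

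Once this is established, the lemma follows just as before: defining $\psi$ by the formula above and letting $u$ be the associated Lax--Milgram solution gives a solution of~\eqref{eq:robin_problem:omega}; since $\mathrm{C}^1(\overline{\Omega})$ is dense in $H^1(\Omega)$, every solution of~\eqref{eq:robin_problem:omega} satisfies $a_{L,\beta}^\omega(u,v)=\psi(v)$ for all $v\in H^1(\Omega)$, whence uniqueness, and the estimate~\eqref{eq:robin_laxmilgram2:est} follows with $c\coloneqq c_1c_2$. I do not anticipate any genuine obstacle; the only point needing care is that in dimension two one sits at the borderline $W^{1,2}$ case of the Sobolev embedding, so $L^\infty$ is excluded — which is exactly why the statement requires $q>1$ strictly rather than $q\ge 1$.
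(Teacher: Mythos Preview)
Your proposal is correct and follows essentially the same approach as the paper's own proof, which simply says to repeat the argument of Lemma~\ref{lem:robin_laxmilgram} using that for $N=2$ the space $H^1(\Omega)$ embeds into $L^r(\Omega)$ and has trace in $L^r(\partial\Omega)$ for every $r<\infty$. Your write-up spells out the details (H\"older with exponents $q,q'$ and the observation that $q>1$ forces $q'<\infty$) that the paper leaves implicit.
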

\begin{proof}
	The proof is similar to the proof of Lemma~\ref{lem:robin_laxmilgram}.
	Here, however, we use that $H^1(\Omega)$ embeds into $L^r(\Omega)$ for
	every $r < \infty$, and that the trace operator maps $H^1(\Omega)$ into
	$L^r(\partial\Omega)$ for every $r < \infty$.
\end{proof}

\begin{remark}\label{rem:fredholm}
	It should be noted that in Lemmata~\ref{lem:robin_laxmilgram}
	and~\ref{lem:robin_laxmilgram2} we can take any $\omega \in \mathds{R}$
	such that~\eqref{eq:robin_problem:omega} has a unique solution for
	some right hand side.
	In fact, let $A$ be the operator from $H^1(\Omega)$ to $H^1(\Omega)'$
	defined by $\apply{Au}{v} \coloneqq a_{L,\beta}(u,v)$.
	Considering $H^1(\Omega)$ as a subspace of $H^1(\Omega)'$ via the scalar
	product in $L^2(\Omega)$, $A$ is a densely defined, closed operator
	on $H^1(\Omega)'$. The resolvent of $A$ is compact since $H^1(\Omega)$
	is compactly embedded into $L^2(\Omega)$.
	For $\omega \in \mathds{R}$, the Fredholm alternative asserts that either
	there exists $u \in H^1(\Omega)$ such that $(\omega + A)u = 0$, which means precisely that
	the solution of~\eqref{eq:robin_problem:omega} is not unique,
	or $\omega + A$ is boundedly invertible, which implies
	estimate~\eqref{eq:robin_laxmilgram:est} or~\eqref{eq:robin_laxmilgram2:est}, respectively.
\end{remark}

Now, as an interlude, we come back to the Neumann problem.
Afterwards, the following propositions will be generalized to cover
Robin problems as well.
\begin{lemma}\label{lem:neumann_Creg_inv}
	Let $p > N$, and let $\omega$ be as in Lemma~\ref{lem:robin_laxmilgram} or
	Lemma~\ref{lem:robin_laxmilgram2}, respectively.
	Assume $\beta = 0$. Then there exist $\gamma > 0$ and a constant $c$ with the following property.
	If $f_0 \in L^{p/2}(\Omega)$, $f_j \in L^p(\Omega)$, $j=1,\dots,N$, and $g \in L^{p-1}(\partial\Omega)$,
	then the unique solution $u$ of~\eqref{eq:robin_problem:omega} is in $\mathrm{C}^{0,\gamma}(\Omega)$ and satisfies
	\[
		\|u\|_{\mathrm{C}^{0,\gamma}(\Omega)}
			\le c \bigl(\|f_0\|_{L^{p/2}(\Omega)} + \sum_{j=1}^N \|f_j\|_{L^p(\Omega)} + \|g\|_{L^{p-1}(\partial\Omega)}\bigr).
	\]
\end{lemma}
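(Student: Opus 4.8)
The plan is to reduce the assertion to Proposition~\ref{prop:neumann_Creg} by absorbing the shift $\omega$ into the zeroth order coefficient of the differential operator, and then to remove the term $\|u\|_{L^2(\Omega)}$ occurring in~\eqref{eq:neumann_Creg:est} with the help of the a priori $H^1$-estimate of Lemma~\ref{lem:robin_laxmilgram} (for $N \ge 3$) or Lemma~\ref{lem:robin_laxmilgram2} (for $N = 2$). The genuine analytic work — the reflection construction of Section~\ref{sec:elliptic} and the interior regularity estimates due to de Giorgi, Nash, and Moser — is already encapsulated in Proposition~\ref{prop:neumann_Creg}, so only a bit of bookkeeping is needed.

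First I would let $L_\omega$ denote the differential operator with coefficients $a_{ij}$, $b_j$, $c_i$, and $d + \omega$; then $a_{L,0}^\omega = a_{L_\omega,0}$, and $L_\omega$ is again strictly elliptic in divergence form with bounded measurable coefficients and the same ellipticity constant $\alpha$, so it satisfies the standing assumptions of Section~\ref{sec:prelim}. Consequently $u \in H^1(\Omega)$ solves~\eqref{eq:robin_problem:omega} (with $\beta = 0$) if and only if it solves~\eqref{eq:robin_problem} for $L_\omega$ in place of $L$. Applying Proposition~\ref{prop:neumann_Creg} to $L_\omega$ therefore produces $\gamma > 0$ and a constant $c_1$ such that, under the stated hypotheses on the data,
\begin{equation*}
	\|u\|_{\mathrm{C}^{0,\gamma}(\Omega)} \le c_1 \bigl( \|u\|_{L^2(\Omega)} + \|f_0\|_{L^{p/2}(\Omega)} + \sum_{j=1}^N \|f_j\|_{L^p(\Omega)} + \|g\|_{L^{p-1}(\partial\Omega)} \bigr) .
\end{equation*}
It then remains to dominate $\|u\|_{L^2(\Omega)}$ by the data alone. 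Here I would check that the hypotheses of the present lemma imply those of Lemma~\ref{lem:robin_laxmilgram} or Lemma~\ref{lem:robin_laxmilgram2}: since $\Omega$ and $\partial\Omega$ have finite measure and $p > N \ge 2$, one has $p/2 > N/2 \ge 2N/(N+2)$, $p > 2$, and $p - 1 > N - 1 \ge 2(N-1)/N$, so for $N \ge 3$ the continuous embeddings $L^{p/2}(\Omega) \hookrightarrow L^{2N/(N+2)}(\Omega)$, $L^p(\Omega) \hookrightarrow L^2(\Omega)$ and $L^{p-1}(\partial\Omega) \hookrightarrow L^{2(N-1)/N}(\partial\Omega)$ hold, while for $N = 2$ one sets $q \coloneqq \min\{p/2, p-1\} > 1$ and uses $L^{p/2}(\Omega) \hookrightarrow L^q(\Omega)$, $L^p(\Omega) \hookrightarrow L^2(\Omega)$, and $L^{p-1}(\partial\Omega) \hookrightarrow L^q(\partial\Omega)$. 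In particular $u$ is indeed the unique solution of~\eqref{eq:robin_problem:omega}, and the cited lemma provides a constant $c_2$ with
\begin{equation*}
	\|u\|_{L^2(\Omega)} \le \|u\|_{H^1(\Omega)} \le c_2 \bigl( \|f_0\|_{L^{p/2}(\Omega)} + \sum_{j=1}^N \|f_j\|_{L^p(\Omega)} + \|g\|_{L^{p-1}(\partial\Omega)} \bigr) .
\end{equation*}
Substituting this into the previous estimate and putting $c \coloneqq c_1(1 + c_2)$ completes the argument.

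I do not expect a real obstacle here, since unique solvability and the $H^1$-bound come from the Lax-Milgram lemmas and the Hölder estimate from Proposition~\ref{prop:neumann_Creg}. The only points requiring attention are the verification of the Lebesgue-exponent inequalities above (which use $p > N$ generously) and the harmless observation that the zeroth order shift $\omega$ may be regarded as part of the elliptic operator, so that the reflection argument underlying Proposition~\ref{prop:neumann_Creg} applies verbatim.
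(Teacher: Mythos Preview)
Your proof is correct and follows essentially the same route as the paper: use Lemma~\ref{lem:robin_laxmilgram} or Lemma~\ref{lem:robin_laxmilgram2} to bound $\|u\|_{L^2(\Omega)}$ by the data, then invoke Proposition~\ref{prop:neumann_Creg}. The paper's proof is terser and leaves the absorption of $\omega$ into the zeroth order coefficient and the Lebesgue-exponent comparisons implicit, but the argument is the same.
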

\begin{proof}
	By Lemma~\ref{lem:robin_laxmilgram} or Lemma~\ref{lem:robin_laxmilgram2},
	respectively, the solution is unique, and by~\eqref{eq:robin_laxmilgram:est}
	or~\eqref{eq:robin_laxmilgram2:est} there exists
	a constant $c_1$ such that
	\[
		\| u \|_{L^2(\Omega)} \le c_1 \bigl(\|f_0\|_{L^{p/2}(\Omega)} + \sum_{j=1}^N \|f_j\|_{L^p(\Omega)} + \|g\|_{L^{p-1}(\partial\Omega)}\bigr).
	\]
	Thus the result follows from Proposition~\ref{prop:neumann_Creg}.
\end{proof}

\begin{lemma}\label{lem:neumann_lpreg}
	Let $N \ge 3$, $\frac{2N}{N+2} \le q < \frac{N}{2}$, $\eps > 0$, and let
	$\omega$ be as in Lemma~\ref{lem:robin_laxmilgram}.
	Then there exists a constant $c$ with the following property.
	If $f_0 \in L^{q+\eps}(\Omega)$, $f_j \in L^{Nq/(N-q) + \eps}(\Omega)$, $j=1,\dots,N$, and
	$g \in L^{(N-1)q/(N-q) + \eps}(\partial\Omega)$, then the unique solution
	$u$ of~\eqref{eq:robin_problem:omega} satisfies 
	$u \in L^{Nq/(N-2q)}(\Omega)$ and
	$u|_{\partial\Omega} \in L^{(N-1)q/(N-2q)}(\partial\Omega)$, and
	\begin{align*}
		& \|u\|_{L^{Nq/(N-2q)}(\Omega)} + \|u\|_{L^{(N-1)q/(N-2q)}(\partial\Omega)} \\
			& \quad \le c \bigl(\|f_0\|_{L^{q+\eps}(\Omega)}
				+ \sum_{j=1}^N \|f_j\|_{L^{Nq/(N-q)+\eps}(\Omega)} + \|g\|_{L^{(N-1)q/(N-q)+\eps}(\partial\Omega)}\bigr)
	\end{align*}
\end{lemma}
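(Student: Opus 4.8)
The plan is to read off the solution of \eqref{eq:robin_problem:omega} as the image of the data under a fixed linear operator, and then to interpolate between the two mapping properties that have just been established: the $H^1$-bound of Lemma~\ref{lem:robin_laxmilgram} at one endpoint, and the H\"older (hence $L^\infty$) bound of Lemma~\ref{lem:neumann_Creg_inv} at the other. The exponents in the statement have been rigged so that a single interpolation parameter does the whole job.

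Concretely, I would fix $\omega$ as in Lemma~\ref{lem:robin_laxmilgram}, so that \eqref{eq:robin_problem:omega} is uniquely solvable, and let $T$ be the solution operator. By linearity $u = T_0 f_0 + \sum_{j=1}^N T_j f_j + T_\partial g$, where $T_0,T_j,T_\partial$ are the solution operators for a right-hand side built from $f_0$ alone, $f_j$ alone, or $g$ alone. For each of these, \eqref{eq:robin_laxmilgram:est} together with the embeddings $H^1(\Omega)\hookrightarrow L^{2N/(N-2)}(\Omega)$ and $H^1(\Omega)\hookrightarrow L^{2(N-1)/(N-2)}(\partial\Omega)$ gives boundedness from the ``low'' spaces $L^{2N/(N+2)}(\Omega)$, $L^2(\Omega)$, $L^{2(N-1)/N}(\partial\Omega)$ into $L^{2N/(N-2)}(\Omega)$ and into $L^{2(N-1)/(N-2)}(\partial\Omega)$; on the other hand, Lemma~\ref{lem:neumann_Creg_inv} (applied for some $p>N$ still to be chosen) together with $\mathrm{C}^{0,\gamma}(\Omega)\hookrightarrow L^\infty(\Omega)\cap L^\infty(\partial\Omega)$ gives boundedness from the ``high'' spaces $L^{p/2}(\Omega)$, $L^{p}(\Omega)$, $L^{p-1}(\partial\Omega)$ into $L^\infty(\Omega)$ and into $L^\infty(\partial\Omega)$.

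Now comes the Riesz--Thorin interpolation, legitimate because $\Omega$ and $\partial\Omega$ carry finite measures so the endpoint spaces are genuine $L^p$-spaces. I would take the parameter
\[
 \theta \coloneqq 1 - \frac{2(N-2q)}{q(N-2)},
\]
which lies in $[0,1)$ precisely because $\tfrac{2N}{N+2}\le q<\tfrac N2$. A direct computation shows that with this $\theta$ the interpolated target spaces are exactly
\[
 \Bigl[L^{\frac{2N}{N-2}}(\Omega),\,L^\infty(\Omega)\Bigr]_\theta = L^{\frac{Nq}{N-2q}}(\Omega),
 \qquad
 \Bigl[L^{\frac{2(N-1)}{N-2}}(\partial\Omega),\,L^\infty(\partial\Omega)\Bigr]_\theta = L^{\frac{(N-1)q}{N-2q}}(\partial\Omega).
\]
On the data side, interpolation yields, for each of the three pieces, an input exponent depending on the free parameter $p$; as $p$ decreases to $N$ these input exponents decrease, and a further computation shows that their infima are exactly $q$ for the $f_0$-part, $Nq/(N-q)$ for the $f_j$-parts, and $(N-1)q/(N-q)$ for the $g$-part. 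Hence, given $\eps>0$, I choose $p>N$ close enough to $N$ (depending on $N$, $q$, $\eps$) that the interpolated input exponents do not exceed $q+\eps$, $Nq/(N-q)+\eps$ and $(N-1)q/(N-q)+\eps$ respectively; since $\Omega$ and $\partial\Omega$ have finite measure, the hypotheses of the lemma then place the data in the interpolation input spaces. Summing the three contributions and collecting the (finitely many) interpolation and embedding constants, all depending only on $\Omega$, $L$, $N$, $q$ and $\eps$, gives the asserted estimate.

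The one genuinely delicate point is the bookkeeping of exponents: one must check that the single parameter $\theta$ above simultaneously hits the two target exponents, on $\Omega$ and on $\partial\Omega$, for all three pieces of the datum, and that letting $p\downarrow N$ recovers precisely the three threshold exponents $q$, $Nq/(N-q)$, $(N-1)q/(N-q)$ on the input side. This last fact is exactly what forces the $\eps$-loss and explains why this method cannot reach $\eps=0$. As a consistency check, the endpoint $q=\tfrac{2N}{N+2}$ corresponds to $\theta=0$, where the conclusion degenerates to Lemma~\ref{lem:robin_laxmilgram} itself; everything else in the proof is routine.
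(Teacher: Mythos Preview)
Your proposal is correct and essentially the same as the paper's proof: the paper bundles the data into a single operator on the product space $L^{r}(\Omega)\oplus L^{s}(\Omega)^N\oplus L^{t}(\partial\Omega)$ with target $L^{x}(\Omega)\oplus L^{y}(\partial\Omega)$ and applies complex interpolation once, whereas you split into three operators and apply Riesz--Thorin to each, but the endpoints, the interpolation parameter $\theta = \tfrac{Nq+2q-2N}{q(N-2)}$, and the final $p\downarrow N$ argument producing the $\eps$-loss are identical.
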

\begin{proof}
	Pick $p > N$. It will turn out at the end how close to $N$ we have to pick $p$,
	but this condition will depend only on $N$ and $q$, hence the argument is not circular.

	To simplify the notation of the proof we introduce the Banach spaces
	\[
		L^{r,s,t} \coloneqq L^r(\Omega) \oplus L^s(\Omega)^N \oplus L^t(\partial\Omega)
		\quad\text{and}\quad
		L^{x,y} \coloneqq L^x(\Omega) \oplus L^y(\partial\Omega).
	\]
	Note that the complex interpolation spaces
	$[ L^{r_0,s_0,t_0}, L^{r_1,s_1,t_1} ]_\theta$ and
	$[ L^{x_0,y_0}, L^{x_1,y_1} ]_\theta$, $\theta \in [0,1]$, are by the natural isomorphism
	isomorphic to $L^{r,s,t}$ and $L^{x,y}$, respectively, where
	\begin{align}
		\tfrac{1}{r} & = \tfrac{1-\theta}{r_0} + \tfrac{\theta}{r_1}, &
		\tfrac{1}{s} & = \tfrac{1-\theta}{s_0} + \tfrac{\theta}{s_1}, &
		\tfrac{1}{t} & = \tfrac{1-\theta}{t_0} + \tfrac{\theta}{t_1}, \label{eq:neumann:lpreg:conj} \\
		\tfrac{1}{x} & = \tfrac{1-\theta}{x_0} + \tfrac{\theta}{x_1}, &
		\tfrac{1}{y} & = \tfrac{1-\theta}{y_0} + \tfrac{\theta}{y_1}. \nonumber
	\end{align}
	This follows from~\cite[\S 1.18.4]{Triebel95} and the observation that
	\[
		\bigl[ X_0 \oplus Y_0, X_1 \oplus Y_1 \bigr]_\theta
			\cong \bigl[ X_0, X_1 \bigr]_\theta \oplus \bigl[ Y_0, Y_1 \bigr]_\theta
	\]
	holds for all Banach spaces $X_0$, $X_1$, $Y_0$ and $Y_1$, which is
	a direct consequence of the definition of the complex interpolation
	functor~\cite[\S 1.9]{Triebel95}.

	For $f_0 \in L^{2N/(N+2)}(\Omega)$, $f_j \in L^2(\Omega)$,
	$j=1,\dots,N$, and $g \in L^{2(N-1)/N}(\partial\Omega)$ we denote
	by $R(f_0, (f_j)_{j=1}^N, g)$ the unique solution $u \in H^1(\Omega)$
	of~\eqref{eq:robin_problem:omega}.  It is clear that $R$ is a linear map.
	Let $\gamma > 0$ be as in Lemma~\ref{lem:neumann_Creg_inv}.
	If we consider $H^1(\Omega)$ and $\mathrm{C}^{0,\gamma}(\Omega)$
	as subspaces of $L^{2,2}$ via the injection $u \mapsto (u, u|_{\partial\Omega})$,
	then the Sobolev embedding theorems
	and Lemmata~\ref{lem:robin_laxmilgram} and~\ref{lem:neumann_Creg_inv} show that
	$R$ maps $L^{2N/(N+2),2,2(N-1)/N}$ into
	$L^{2N/(N-2),2(N-1)/(N-2)}$ and in addition $L^{p/2,p,p-1}$ into $L^{\infty,\infty}$.

	Using~\cite[Theorem~1.9.3(a)]{Triebel95} for
	\[
		\theta \coloneqq \tfrac{Nq + 2q - 2N}{q(N-2)},
	\]
	we obtain that $R$ maps
	$L^{r_p,s_p,t_p}$ into $L^{Nq/(N-2q),(N-1)q/(N-2q)}$,
	where $r_p$, $s_p$ and $t_p$ are defined as in~\eqref{eq:neumann:lpreg:conj} for
	\begin{align*}
		r_0 & = \tfrac{2N}{N+2}, &
		s_0 & = 2, &
		t_0 & = \tfrac{2(N-1)}{N}, \\
		r_1 & = \tfrac{p}{2}, &
		s_1 & = p, &
		t_1 & = p-1.
	\end{align*}
	It is easy to see that the dependence of $r_p$, $s_p$ and $t_p$ on $p$
	is continuous and that
	\[
		r_N = q, \quad
		s_N = \tfrac{Nq}{N-q} \quad
		\text{and} \quad
		t_N = \tfrac{(N-1)q}{N-q}.
	\]
	Thus there exists $p > N$ such that
	\[
		r_p < q + \eps, \quad
		s_p < \tfrac{Nq}{N-q} + \eps \quad
		\text{and} \quad
		t_p < \tfrac{(N-1)q}{N-q} + \eps.
	\]
	The result follows if we start the whole argument with such a value for $p$.
\end{proof}

\begin{remark}
	We exclude $N=2$ in Lemma~\ref{lem:neumann_lpreg}
	because the admissible range for $q$ is empty in that case.
	However, if we take $N=2$ and $q=1$ and adopt the convention that
	$\frac{1}{0}$ be $\infty$, Lemma~\ref{lem:neumann_lpreg} is a special case
	of Lemma~\ref{lem:neumann_Creg_inv}.
	More generally, this is true also for $N \ge 3$ in the boundary
	case $q = \frac{N}{2}$.
\end{remark}

Now we come back to Robin boundary conditions.
The following bootstrapping argument allows us to deduce regularity
results for Robin problems from the corresponding results
for Neumann problems.

\begin{lemma}\label{lem:robin_Lpreg}
	Let $N \ge 3$, $\frac{2N}{N+2} \le q \le \frac{N}{2}$ and $\eps > 0$.
	Then there exist $\tilde{\eps} > 0$ and a constant $c$ with the following property.
	If $f_0 \in L^{q+\eps}(\Omega)$, $f_j \in L^{Nq/(N-q) + \eps}(\Omega)$, $j=1,\dots,N$, and
	$g \in L^{(N-1)q/(N-q) + \eps}(\partial\Omega)$, then every solution
	$u \in H^1(\Omega)$ of~\eqref{eq:robin_problem} satisfies 
	$u \in L^{q+\tilde{\eps}}(\Omega)$, $u|_{\partial\Omega} \in L^{(N-1)q/(N-q) + \tilde{\eps}}(\partial\Omega)$ and
	\begin{align*}
		\|u\|_{L^{q+\tilde{\eps}}(\Omega)}
			& \le c \bigl(\|u\|_{L^2(\Omega)} + \|f_0\|_{L^{q+\eps}(\Omega)} + \sum_{j=1}^N \|f_j\|_{L^{Nq/(N-q)+\eps}(\Omega)} \\
			& \qquad + \|g\|_{L^{(N-1)q/(N-q)+\eps}(\partial\Omega)}\bigr).
	\end{align*}
\end{lemma}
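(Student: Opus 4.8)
The plan is to absorb the Robin term $\int_{\partial\Omega}\beta uv\,\dx\sigma$ into the right-hand side, turning \eqref{eq:robin_problem} into a Neumann-type problem \eqref{eq:robin_problem:omega} with $\beta=0$ whose data involves $u$ itself, and then to feed the result into Lemma~\ref{lem:neumann_lpreg} repeatedly, gaining integrability at each pass. Concretely, I would fix $\omega$ as in Lemma~\ref{lem:neumann_lpreg} (i.e.\ as in Lemma~\ref{lem:robin_laxmilgram} for $L$ with $\beta=0$). If $u\in H^1(\Omega)$ solves \eqref{eq:robin_problem}, then from $a_{L,0}^\omega(u,v)=a_{L,\beta}(u,v)-\int_{\partial\Omega}\beta uv\,\dx\sigma+\omega\int_\Omega uv\,\dx\lambda$ one sees that $u$ is the unique solution of \eqref{eq:robin_problem:omega} with $\beta=0$ and data $f_0+\omega u$, $(f_j)_{j=1}^N$, $g-\beta u$; hence Lemma~\ref{lem:neumann_lpreg} applies to $u$ for every admissible exponent for which this modified data has the required integrability. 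Testing \eqref{eq:robin_problem} with $v=u$ (legitimate by density of $\mathrm{C}^1(\overline\Omega)$ in $H^1(\Omega)$) and invoking the G\aa rding inequality underlying Lemma~\ref{lem:robin_laxmilgram} yields $\|u\|_{H^1(\Omega)}\le c(\|u\|_{L^2(\Omega)}+\text{data norms})$; this is what reduces the final estimate to one involving only $\|u\|_{L^2(\Omega)}$.

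The bootstrap then runs as follows. By the Sobolev embedding theorem $u\in L^{a_0}(\Omega)$ and $u|_{\partial\Omega}\in L^{b_0}(\partial\Omega)$ with $a_0=\tfrac{2N}{N-2}$ and $b_0=\tfrac{2(N-1)}{N-2}=\tfrac{N-1}{N}a_0$. Assume inductively that $u\in L^{a_k}(\Omega)$, $u|_{\partial\Omega}\in L^{b_k}(\partial\Omega)$ with $b_k=\tfrac{N-1}{N}a_k$ and $a_k<\tfrac{Nq}{N-q}$, together with norm bounds controlled by $\|u\|_{L^2(\Omega)}$ and the data. Since $\beta$ is bounded, $f_0+\omega u\in L^{q+\eps}\cap L^{a_k}(\Omega)$ and $g-\beta u\in L^{(N-1)q/(N-q)+\eps}\cap L^{b_k}(\partial\Omega)$. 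A short computation with the exponents of Lemma~\ref{lem:neumann_lpreg} shows that the binding requirement is the one forced by the boundary datum, $\tfrac{(N-1)q_k}{N-q_k}\le b_k=\tfrac{N-1}{N}a_k$, i.e.\ $q_k\le\tfrac{Na_k}{N+a_k}$, and that in the regime $a_k<\tfrac{Nq}{N-q}$ this is stricter than $q_k\le q$; so I choose $q_k$ slightly below $\tfrac{Na_k}{N+a_k}$, which lies in $[\tfrac{2N}{N+2},q)$ and is thus admissible. Lemma~\ref{lem:neumann_lpreg} then gives $u\in L^{a_{k+1}}(\Omega)$, $u|_{\partial\Omega}\in L^{b_{k+1}}(\partial\Omega)$ with $a_{k+1}=\tfrac{Nq_k}{N-2q_k}$, $b_{k+1}=\tfrac{N-1}{N}a_{k+1}$, and the estimate is propagated. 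Since $\tfrac{1}{a_{k+1}}=\tfrac{1}{q_k}-\tfrac2N$ equals, up to the small loss from the choice of $q_k$, $\tfrac{1}{a_k}-\tfrac1N$, after a number of steps depending only on $N$ and $q$ one reaches $a_k\ge\tfrac{Nq}{N-q}$.

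Once $a_k\ge\tfrac{Nq}{N-q}$ the constraint $q_k\le q$ becomes the binding one, and I apply Lemma~\ref{lem:neumann_lpreg} a last time with $q_k$ equal to $q$ (or, to keep an $\eps$-margin, or in the endpoint $q=\tfrac N2$ where $\tfrac{Nq}{N-2q}=\infty$, slightly below $q$). This gives $u\in L^{Nq_k/(N-2q_k)}(\Omega)$ and $u|_{\partial\Omega}\in L^{(N-1)q_k/(N-2q_k)}(\partial\Omega)$; since $\tfrac{Nq_k}{N-2q_k}>q$ and $\tfrac{(N-1)q_k}{N-2q_k}>\tfrac{(N-1)q}{N-q}$ once $q_k$ is close enough to $q$, the asserted $\tilde\eps>0$ exists, and unwinding the finitely many propagated estimates produces the constant $c$ in terms of $\|u\|_{L^2(\Omega)}$ and the data norms.

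I expect the main obstacle to be the bookkeeping of exponents rather than anything conceptual: one must check at every step that the modified data $f_0+\omega u$, $f_j$, $g-\beta u$ genuinely lands in the \emph{open} ranges $L^{q_k+\eps_k}(\Omega)$, $L^{Nq_k/(N-q_k)+\eps_k}(\Omega)$, $L^{(N-1)q_k/(N-q_k)+\eps_k}(\partial\Omega)$ demanded by Lemma~\ref{lem:neumann_lpreg} --- in particular that the boundary exponent $\tfrac{(N-1)q_k}{N-q_k}$ never exceeds the trace integrability $\tfrac{N-1}{N}a_k$ inherited from the previous step --- and that the finitely many $\eps$-losses caused by taking each $q_k$ strictly below the extremal value can be kept small enough (in terms of $N$, $q$ and $\eps$) that a genuine $\tilde\eps>0$ survives to the end.
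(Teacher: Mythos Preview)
Your proposal is correct and follows the same bootstrapping strategy as the paper: rewrite the Robin problem as a Neumann-type problem \eqref{eq:robin_problem:omega} with $u$-dependent data and feed it repeatedly into Lemma~\ref{lem:neumann_lpreg} until the target integrability is reached. Your parameterisation via $a_k$ (the current integrability of $u$) with $b_k=\tfrac{N-1}{N}a_k$ is just a relabelling of the paper's parameterisation via $q_n$ (the exponent at which Lemma~\ref{lem:neumann_lpreg} is invoked).

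There is one point on which your write-up is actually more careful than the paper's. You explicitly move $\beta u$ into the boundary datum $g-\beta u$ and observe that the resulting trace constraint $\tfrac{(N-1)q_k}{N-q_k}\le b_k$ is the binding one, which forces the effective recursion $\tfrac{1}{a_{k+1}}\approx\tfrac{1}{a_k}-\tfrac{1}{N}$. The paper's displayed identity \eqref{eq:robin_Lpreg:modeq} keeps $a_{L,\beta}^\omega$ on the left and does not show the $-\beta u$ term on the right, and its recursion $q_{n+1}=\min\{q,\tfrac{Nq_n}{N-2q_n}\}$ is the one you would get if only the interior term $\omega u$ mattered; with that recursion the boundary part of $(P_{n+1})$ does not follow from the output of Lemma~\ref{lem:neumann_lpreg} when $N$ is large (e.g.\ $N=7$, $q=3$). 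Your version, with the slower recursion dictated by the trace, is the one that actually closes the induction; the number of steps is still finite and depends only on $N$ and $q$, so the conclusion is unchanged.
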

\begin{proof}
	Define by induction
	\[
		q_0 \coloneqq \tfrac{2N}{N+2},
		\quad\text{and}\quad
		q_{n+1} \coloneqq \min\bigl\{ q, \tfrac{Nq_n}{N-2q_n} \bigr\},
	\]
	where we adopt the convention that $1/0 \coloneqq \infty$.
	Note that there exists $n_0 \in \mathds{N}_0$ such that $q_n = q$, since otherwise
	we would have
	\[
		q_n = \tfrac{Nq_{n-1}}{N-2q_{n-1}} \ge \tfrac{N}{N-2} q_{n-1} \ge \dots \ge \bigl( \tfrac{N}{N-2} \bigr)^n q_0 \to \infty \quad (n \to \infty)
	\]
	which is not possible since $q_n \le q$ for all $n \in \mathds{N}$ by definition.

	For $n \in \mathds{N}_0$, we say that $(P_n)$ is fulfilled
	if there exist $\eps_n > 0$ and a constant $c_n$ with the following property.
	If $f_0 \in L^{q+\eps}(\Omega)$, $f_j \in L^{Nq/(N-q) + \eps}(\Omega)$, $j=1,\dots,N$, and
	$g \in L^{(N-1)q/(N-q) + \eps}(\partial\Omega)$, then every solution
	$u \in H^1(\Omega)$ of~\eqref{eq:robin_problem} satisfies 
	$u \in L^{q_n+\eps_n}(\Omega)$, $u|_{\partial\Omega} \in L^{(N-1)q_n/(N-q_n) + \eps_n}(\partial\Omega)$ and
	\begin{equation}\label{eq:robin_Lpreg:indest}
		\begin{aligned}
		\|u\|_{L^{q_n+\eps_n}(\Omega)}
			& \le c_n \bigl(\|u\|_{L^2(\Omega)} + \|f_0\|_{L^{q+\eps}(\Omega)}
			+ \sum_{j=1}^N \|f_j\|_{L^{Nq/(N-q)+\eps}(\Omega)} \\
			& \qquad + \|g\|_{L^{(N-1)q/(N-q)+\eps}(\partial\Omega)}\bigr).
		\end{aligned}
	\end{equation}

	The statement $(P_0)$ is obviously true.
	So now assume that $(P_n)$ is true for some $n \in \mathds{N}_0$.
	If $q_n = q$, then $(P_{n+1})$ is trivially fulfilled since
	it is the same statement as $(P_n)$.
	Thus we may assume $q_n < q$ without loss of generality.
	Let $\omega$ be as in Lemma~\ref{lem:robin_laxmilgram} or
	Lemma~\ref{lem:robin_laxmilgram2}, respectively,
	and note that every solution $u \in H^1(\Omega)$ of~\eqref{eq:robin_problem}
	satisfies
	\begin{equation}\label{eq:robin_Lpreg:modeq}
		a_{L,\beta}^\omega(u,v) = \int_\Omega (\omega u + f_0) v \; \dx\lambda + \sum_{j=1}^N \int_\Omega f_j D_jv \; \dx\lambda + \int_{\partial\Omega} g v \; \dx\sigma
	\end{equation}
	for all $v \in \mathrm{C}^1(\overline{\Omega})$, i.e., $u$ solves~\eqref{eq:robin_problem:omega}
	for a different right-hand side that involves $u$.
	Thus Lemma~\ref{lem:neumann_lpreg} applied to a value $\tilde{q}$ such that
	$q_n < \tilde{q} < \min\{ q, q_n + \eps_n \}$ implies
	that there exist constants $\tilde{c}$ and $\eps_{n+1} > 0$ such that
	\begin{align*}
		\|u\|_{L^{q_{n+1}+\eps_{n+1}}(\Omega)}
			& \le \tilde{c} \bigl(\|u\|_{L^{q_n+\eps_n}(\Omega)} + \|f_0\|_{L^{q+\eps}(\Omega)}
			+ \sum_{j=1}^N \|f_j\|_{L^{Nq/(N-q)+\eps}(\Omega)} \\
			& \qquad + \|g\|_{L^{(N-1)q/(N-q)+\eps}(\partial\Omega)}\bigr).
	\end{align*}
	Using $(P_n)$ to estimate $\|u\|_{L^{q_n+\eps_n}(\Omega)}$ as in~\eqref{eq:robin_Lpreg:indest},
	we have proved $(P_{n+1})$.
	By induction, $(P_n)$ is true for every $n \in \mathds{N}_0$.
	Since the statement of the lemma is equivalent to $(P_{n_0})$ for some $n_0 \in \mathds{N}_0$
	such that $q_{n_0} = q$, this finishes the proof.
\end{proof}

The following theorem summarizes (and extends) all previous results in this section
with the exception of Lemmata~\ref{lem:robin_laxmilgram} and~\ref{lem:robin_laxmilgram2},
which are slightly sharper.
As always, we assume that $\Omega \subset \mathds{R}^N$, $N \ge 2$, is Lipschitz regular
and that $L$ is a strictly elliptic differential operator with bounded coefficients.
\begin{theorem}\label{thm:robin_reg}
	Let $\frac{2N}{N+2} \le q \le \frac{N}{2}$, $\eps > 0$.
	Then there exist $\gamma > 0$ and a constant $c$ with the following property.
	If $f_0 \in L^{q+\eps}(\Omega)$, $f_j \in L^{Nq/(N-q) + \eps}(\Omega)$, $j=1,\dots,N$, and
	$g \in L^{(N-1)q/(N-q) + \eps}(\partial\Omega)$, then every solution
	$u \in H^1(\Omega)$ of~\eqref{eq:robin_problem} satisfies 
	\begin{enumerate}[(i)]
	\item\label{thm:robin_reg:noninv_lp}
		if $q < N/2$, then
		$u \in L^{Nq/(N-2q)}(\Omega)$, $u|_{\partial\Omega} \in L^{(N-1)q/(N-2q)}(\partial\Omega)$, and
		\begin{align*}
			& \|u\|_{L^{Nq/(N-2q)}(\Omega)} + \|u\|_{L^{(N-1)q/(N-2q)}(\partial\Omega)} \\
				& \quad \le c \bigl(\|u\|_{L^2(\Omega)} + \|f_0\|_{L^{q+\eps}(\Omega)}
					+ \sum_{j=1}^N \|f_j\|_{L^{Nq/(N-q)+\eps}(\Omega)} + \|g\|_{L^{(N-1)q/(N-q)+\eps}(\partial\Omega)}\bigr);
		\end{align*}
	\item\label{thm:robin_reg:noninv_C}
		if $q = N/2$, then
		$u \in \mathrm{C}^{0,\gamma}(\Omega)$, and
		\[
			\|u\|_{\mathrm{C}^{0,\gamma}(\Omega)}
				\le c \bigl(\|u\|_{L^2(\Omega)} + \|f_0\|_{L^{N/2+\eps}(\Omega)} + \sum_{j=1}^N \|f_j\|_{L^{N+\eps}(\Omega)} + \|g\|_{L^{N-1+\eps}(\partial\Omega)}\bigr).
		\]
	\end{enumerate}
	Moreover, if the solution is unique, then it satisfies
	\begin{enumerate}[(i)]\setcounter{enumi}{2}
	\item\label{thm:robin_reg:inv_lp}
		if $q < N/2$, then
		$u \in L^{Nq/(N-2q)}(\Omega)$, $u|_{\partial\Omega} \in L^{(N-1)q/(N-2q)}(\partial\Omega)$, and
		\begin{align*}
			& \|u\|_{L^{Nq/(N-2q)}(\Omega)} + \|u\|_{L^{(N-1)q/(N-2q)}(\partial\Omega)} \\
				& \quad \le c \bigl(\|f_0\|_{L^{q+\eps}(\Omega)}
					+ \sum_{j=1}^N \|f_j\|_{L^{Nq/(N-q)+\eps}(\Omega)} + \|g\|_{L^{(N-1)q/(N-q)+\eps}(\partial\Omega)}\bigr);
		\end{align*}
	\item\label{thm:robin_reg:inv_C}
		if $q = N/2$, then
		$u \in \mathrm{C}^{0,\gamma}(\Omega)$, and
		\[
			\|u\|_{\mathrm{C}^{0,\gamma}(\Omega)}
				\le c \bigl(\|f_0\|_{L^{N/2+\eps}(\Omega)} + \sum_{j=1}^N \|f_j\|_{L^{N+\eps}(\Omega)} + \|g\|_{L^{N-1+\eps}(\partial\Omega)}\bigr).
		\]
	\end{enumerate}
\end{theorem}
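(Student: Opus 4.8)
The plan is to assemble the theorem from the results already proved, the one new ingredient being the reduction of the Robin problem to a Neumann problem obtained by moving the boundary term $\beta u$ to the right-hand side. Since $a_{L,\beta}(u,v)=a_{L,0}(u,v)+\int_{\partial\Omega}\beta uv$, every $u\in H^1(\Omega)$ solving~\eqref{eq:robin_problem} also solves~\eqref{eq:robin_problem} with $\beta$ replaced by $0$ and $g$ replaced by $g-\beta u$, and for every $\omega\in\mathds{R}$ it solves~\eqref{eq:robin_problem:omega} with $\beta=0$ and right-hand side $(\omega u+f_0,(f_j)_{j=1}^N,g-\beta u)$. We treat $N\ge 3$; for $N=2$ necessarily $q=1$ and only parts~\eqref{thm:robin_reg:noninv_C} and~\eqref{thm:robin_reg:inv_C} are non-vacuous, and the argument below carries over once Lemma~\ref{lem:robin_Lpreg} is replaced by the embedding of $H^1(\Omega)$ into every $L^r(\Omega)$ and of its trace into every $L^r(\partial\Omega)$, and Lemma~\ref{lem:robin_laxmilgram} (with~\eqref{eq:robin_laxmilgram:est}) is replaced by Lemma~\ref{lem:robin_laxmilgram2} (with~\eqref{eq:robin_laxmilgram2:est}), using $L^2(\Omega)\hookrightarrow L^{1+\eps'}(\Omega)$ for small $\eps'$ to keep the estimates in terms of $\|u\|_{L^2(\Omega)}$.

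For parts~\eqref{thm:robin_reg:noninv_lp} and~\eqref{thm:robin_reg:noninv_C} I first apply Lemma~\ref{lem:robin_Lpreg} with the given $q$ and $\eps$, obtaining $\tilde\eps>0$ with $u\in L^{q+\tilde\eps}(\Omega)$ and $u|_{\partial\Omega}\in L^{(N-1)q/(N-q)+\tilde\eps}(\partial\Omega)$ together with the bound stated there. Set $\eps^\dagger\coloneqq\min\{\eps,\tilde\eps\}$; since $\beta\in L^\infty(\partial\Omega)$ and $\partial\Omega$ has finite measure, $g-\beta u\in L^{(N-1)q/(N-q)+\eps^\dagger}(\partial\Omega)$ with a bound of the same form. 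If $q<N/2$, let $\omega$ be as in Lemma~\ref{lem:robin_laxmilgram} applied with $\beta=0$; then~\eqref{eq:robin_problem:omega} is uniquely solvable, hence $u$ is its unique solution for the right-hand side $(\omega u+f_0,(f_j)_{j=1}^N,g-\beta u)$, and since $\omega u+f_0\in L^{q+\eps^\dagger}(\Omega)$, Lemma~\ref{lem:neumann_lpreg} applied with parameter $q$ and $\eps^\dagger$ in place of $\eps$ gives $u\in L^{Nq/(N-2q)}(\Omega)$, $u|_{\partial\Omega}\in L^{(N-1)q/(N-2q)}(\partial\Omega)$ and the corresponding estimate; bounding $\|\omega u+f_0\|_{L^{q+\eps^\dagger}(\Omega)}$ and $\|g-\beta u\|_{L^{(N-1)q/(N-q)+\eps^\dagger}(\partial\Omega)}$ back by means of Lemma~\ref{lem:robin_Lpreg} yields part~\eqref{thm:robin_reg:noninv_lp}. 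If $q=N/2$, then $(N-1)q/(N-q)=N-1$, so $g-\beta u\in L^{N-1+\eps^\dagger}(\partial\Omega)$ while $f_0\in L^{N/2+\eps}(\Omega)$; choosing $p$ with $N<p\le N+\eps^\dagger$, the triple $(f_0,(f_j)_{j=1}^N,g-\beta u)$ satisfies the hypotheses of Proposition~\ref{prop:neumann_Creg} for the Neumann problem solved by $u$, which therefore produces $\gamma>0$, the inclusion $u\in\mathrm{C}^{0,\gamma}(\Omega)$ and~\eqref{eq:neumann_Creg:est}, and re-estimating $\|u\|_{L^{p-1}(\partial\Omega)}$ by Lemma~\ref{lem:robin_Lpreg} gives part~\eqref{thm:robin_reg:noninv_C}.

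For parts~\eqref{thm:robin_reg:inv_lp} and~\eqref{thm:robin_reg:inv_C}, suppose the solution of~\eqref{eq:robin_problem} is unique. Then the operator $A$ of Remark~\ref{rem:fredholm} is injective, hence by the Fredholm alternative boundedly invertible, so~\eqref{eq:robin_laxmilgram:est} holds with $\omega=0$; in particular $\|u\|_{L^2(\Omega)}\le\|u\|_{H^1(\Omega)}\le c\bigl(\|f_0\|_{L^{2N/(N+2)}(\Omega)}+\sum_{j=1}^N\|f_j\|_{L^2(\Omega)}+\|g\|_{L^{2(N-1)/N}(\partial\Omega)}\bigr)$. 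From $q\ge\frac{2N}{N+2}$ one reads off $q+\eps\ge\frac{2N}{N+2}$, $\frac{Nq}{N-q}+\eps\ge 2$ and $\frac{(N-1)q}{N-q}+\eps\ge\frac{2(N-1)}{N}$, so, the underlying measures being finite, the right-hand side above is dominated by the data norms occurring in parts~\eqref{thm:robin_reg:noninv_lp} and~\eqref{thm:robin_reg:noninv_C}. Inserting this estimate for $\|u\|_{L^2(\Omega)}$ into the bounds just proved removes that term and establishes parts~\eqref{thm:robin_reg:inv_lp} and~\eqref{thm:robin_reg:inv_C}.

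No essentially new difficulty arises: the real content is the bookkeeping of Lebesgue exponents. The point on which everything turns is that the trace exponent $(N-1)q/(N-q)$ delivered by Lemma~\ref{lem:robin_Lpreg} is exactly the one required by Lemma~\ref{lem:neumann_lpreg} and Proposition~\ref{prop:neumann_Creg} for the shifted boundary datum $g-\beta u$; apart from that one only needs to track the auxiliary exponents $\tilde\eps$ and $\eps^\dagger$ and to handle the degenerate case $N=2$.
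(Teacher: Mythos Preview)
Your proof is correct and follows essentially the same route as the paper's: reduce the Robin problem to a Neumann problem via~\eqref{eq:robin_Lpreg:modeq} (moving $\beta u$ into the boundary datum), invoke Lemma~\ref{lem:robin_Lpreg} to place $u$ and its trace in the Lebesgue spaces required by Lemma~\ref{lem:neumann_lpreg} or Proposition~\ref{prop:neumann_Creg}, and then use the Fredholm alternative of Remark~\ref{rem:fredholm} to eliminate the $\|u\|_{L^2(\Omega)}$ term under uniqueness. The only cosmetic point is that Lemma~\ref{lem:robin_Lpreg} as stated bounds only $\|u\|_{L^{q+\tilde\eps}(\Omega)}$ and not the trace norm you need for $\|g-\beta u\|$; however, its proof (the final application of Lemma~\ref{lem:neumann_lpreg}) does yield that trace bound, and the paper relies on this implicitly as well.
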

\begin{proof}
	We can consider solutions of~\eqref{eq:robin_problem} as solutions of a problem of the
	kind~\eqref{eq:robin_problem:omega} as in~\eqref{eq:robin_Lpreg:modeq},
	where $\omega$ is as in Lemma~\ref{lem:robin_laxmilgram} or
	Lemma~\ref{lem:robin_laxmilgram2}, respectively.
	For $N \ge 3$, Lemma~\ref{lem:robin_Lpreg} asserts that for this equation the
	assumptions of Lemma~\ref{lem:neumann_lpreg} ($q < N/2$) or Proposition~\ref{prop:neumann_Creg}
	($q = N/2$) are satisfied and that we can estimate the norm of
	$\|u\|_{L^{q+\eps}(\Omega)}$ appropriately, whereas for $N=2$ this is obvious.
	From this we obtain~\eqref{thm:robin_reg:noninv_lp} and~\eqref{thm:robin_reg:noninv_C}.

	For~\eqref{thm:robin_reg:inv_lp} and~\eqref{thm:robin_reg:inv_C} it only remains
	to show that $\|u\|_{L^2(\Omega)}$ can be estimated accordingly if the solution
	is unique. This can be proved using the Fredholm alternative,
	see Remark~\ref{rem:fredholm}.
\end{proof}

\section{Parabolic Problems}\label{sec:parabolic}
Again, let $\Omega \subset \mathds{R}^N$ be Lipschitz regular
and assume that $L$ is a strictly elliptic differential operator on
$\Omega$ as defined in Section~\ref{sec:prelim}.
It has been shown in~\cite{Warma06} that $-L = \Delta$
with Robin or Wentzell-Robin boundary conditions
generates a $\mathrm{C}_0$-semigroup
on $\mathrm{C}(\overline{\Omega})$, although the calculations
contain a small mistake that oversimplifies the arguments.
We employ similar ideas to show the result is true for general
operators.

\subsection{Neumann and Robin boundary conditions}
Let $A$ be the operator on $L^2(\Omega)$ associated with
the form $a_{L,\beta}$ defined in~\eqref{eq:robin_form}.
It follows from the theory of forms that $-A$ generates a positive, compact, holomorphic
$\mathrm{C}_0$-semigroup $(T(t))_{t \ge 0}$ on $L^2(\Omega)$,
cf.\ for example~\cite{Ouhabaz05}.
The trajectories of this semigroup are the unique mild solutions
of the parabolic problems~\eqref{eq:robin_parabolic}
with Robin boundary conditions, compare~\cite[\S VI.5]{EN00}.

It is known that each $T(t)$ is a kernel operator with a bounded kernel $k(t,\cdot,\cdot)$
which has Gaussian estimates~\cite[Corollary~6.1]{Daners00}.
Thus $(T(t))_{t \ge 0}$ extrapolates to a family of holomorphic semigroups on $L^p(\Omega)$,
$p \in [1,\infty]$, which have the same angle of holomorphy,
and all operator $T(z)$ for $\Real z > 0$
are kernel operators satisfying a Gaussian estimate~\cite[Theorem~5.4]{AtE97}.

We start this section by an investigation of the regularity of these kernels.
In particular it follows from the next theorem that the kernels are jointly
continuous in the time variable (away from $t=0$) and in the space variables
(up to the boundary of $\Omega$).
\begin{theorem}
	The function $t \mapsto k(t,\cdot,\cdot)$ is analytic from
	$(0,\infty)$ to $\mathrm{C}^{0,\gamma}(\Omega \times \Omega)$
	for $\gamma$ as in Theorem~\ref{thm:robin_reg}.
	In particular,
	$k \in \mathrm{C}^{0,\gamma}( [\tau_1,\tau_2] \times \Omega \times \Omega )$
	for $0 < \tau_1 \le \tau_2 < \infty$.
\end{theorem}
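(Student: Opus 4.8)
The plan is to derive the regularity of the kernel $k(t,\cdot,\cdot)$ from the elliptic regularity result Theorem~\ref{thm:robin_reg} applied to the resolvent of $A$, combined with standard analyticity of the semigroup. The key observation is that the kernel can be recovered from iterates of the semigroup: since each $T(t)$ has a bounded kernel, for any $t>0$ we may write $T(t) = T(t/2)T(t/2)$, and hence $k(t,x,y) = \int_\Omega k(t/2,x,z) k(t/2,z,y) \, \dx\lambda(z)$. More usefully, fixing $t>0$ and $y\in\Omega$, the function $x\mapsto k(t,x,y)$ is (up to normalization) the value at $x$ of $T(t/2)$ applied to the function $k(t/2,\cdot,y)\in L^2(\Omega)$; but $T(t/2)$ maps $L^2(\Omega)$ into $D(A^n)$ for every $n$, and for $n$ large enough $D(A^n)$ embeds into $\mathrm{C}^{0,\gamma}(\Omega)$ by Theorem~\ref{thm:robin_reg}. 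So the first step is to establish this embedding $D(A^n)\hookrightarrow \mathrm{C}^{0,\gamma}(\Omega)$: if $u\in D(A^n)$ then $u$ solves $a_{L,\beta}(u,v) = \int_\Omega (A u) v$, i.e.\ problem~\eqref{eq:robin_problem} with $f_0 = Au$, $f_j=0$, $g=0$; iterating Theorem~\ref{thm:robin_reg}\eqref{thm:robin_reg:noninv_lp} to bootstrap the $L^p$-integrability of $u$ from that of $Au, A^2u,\dots$, and finally invoking part~\eqref{thm:robin_reg:noninv_C}, gives $u\in\mathrm{C}^{0,\gamma}(\Omega)$ with a norm estimate controlled by $\|A^n u\|_{L^2}$.

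Next I would upgrade this to joint Hölder continuity on $\Omega\times\Omega$ and analyticity in $t$. For joint continuity in the two space variables, I would use symmetry of the roles of $x$ and $y$: writing $T(t)=T(t/3)T(t/3)T(t/3)$, the map $(x,y)\mapsto k(t,x,y)$ equals, after inserting the kernel of the middle factor, a bilinear expression $\int\int k(t/3,x,z_1) k(t/3,z_1,z_2) k(t/3,z_2,y)$, in which the $x$-dependence and the $y$-dependence each factor through $T(t/3)$ acting on an $L^2$-function (with the other variable as a parameter); by the embedding from the first step, $x\mapsto k(t,x,y)$ and $y\mapsto k(t,x,y)$ are both $\mathrm{C}^{0,\gamma}$ uniformly in the frozen variable, and a standard argument then yields membership in $\mathrm{C}^{0,\gamma}(\Omega\times\Omega)$ (the Hölder seminorm of a function of two variables that is uniformly Hölder in each variable separately is controlled, up to halving the exponent if one is not careful, but here one gets the same $\gamma$ by splitting an increment into an $x$-increment and a $y$-increment). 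For analyticity in $t$: the map $t\mapsto T(t)$ is analytic from $(0,\infty)$ into $\mathcal{L}(L^2(\Omega))$, indeed into $\mathcal{L}(L^2(\Omega),D(A^n))$ for any $n$, since $T(t) = A^{-n} (A^n T(t))$ and $A^n T(t)$ is analytic because the semigroup is holomorphic. Composing with the bounded embedding $D(A^n)\hookrightarrow\mathrm{C}^{0,\gamma}(\Omega)$ and with the two-sided factorization that realizes $k(t,\cdot,\cdot)$ as a product of three such analytic operator-valued functions, one concludes that $t\mapsto k(t,\cdot,\cdot)$ is analytic from $(0,\infty)$ into $\mathrm{C}^{0,\gamma}(\Omega\times\Omega)$.

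Finally, the last sentence of the theorem is immediate: an analytic map on the open interval $(0,\infty)$ is in particular continuous, hence bounded on the compact subinterval $[\tau_1,\tau_2]$, and a continuous map into $\mathrm{C}^{0,\gamma}(\Omega\times\Omega)$ gives, after unravelling the definitions, a function $k\in\mathrm{C}^{0,\gamma}([\tau_1,\tau_2]\times\Omega\times\Omega)$ (joint Hölder continuity in all three variables, with the same exponent in the space variables and some positive exponent in time coming from the analyticity, which in fact gives Lipschitz-in-$t$ on compact subintervals).

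I expect the main obstacle to be the first step, namely making precise and rigorous the identification of $x\mapsto k(t,x,y)$ with $T(t/2)$ applied to an $L^2$-function, together with the uniformity of the resulting Hölder estimate in the parameter $y$. One has to be careful that the kernel is only defined up to null sets, so the pointwise identity $k(t,x,y)=(T(t/2)k(t/2,\cdot,y))(x)$ must be read as selecting the continuous representative; the Gaussian estimates cited from~\cite{Daners00,AtE97} guarantee $k(t/2,\cdot,y)\in L^2(\Omega)$ with $L^2$-norm locally bounded in $y$, and then the elliptic estimate~\eqref{eq:neumann_Creg:est}-type bound from Theorem~\ref{thm:robin_reg} transfers this to a uniform $\mathrm{C}^{0,\gamma}$ bound. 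A secondary technical point is checking that the complex/analytic structure survives the compositions, which is routine since all the maps involved are either analytic operator-valued functions or fixed bounded (bi)linear maps.
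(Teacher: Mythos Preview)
Your proposal is correct and follows essentially the same route as the paper: both arguments use Theorem~\ref{thm:robin_reg} iteratively to obtain $R(\lambda,A)^m L^2(\Omega) \subset \mathrm{C}^{0,\gamma}(\Omega)$ (equivalently $D(A^m)\hookrightarrow \mathrm{C}^{0,\gamma}$), deduce that $T(t)\colon L^2(\Omega)\to\mathrm{C}^{0,\gamma}(\Omega)$ is bounded for each $t>0$, factor $T(z)=T(s)\,T(z-2s)\,T(s)$ to get a uniform $\mathrm{C}^{0,\gamma}$ bound on $x\mapsto k(z,x,y)$, use duality (the adjoint form) for the $y$-variable, and then split an increment in $(x,y)$ to pass to $\mathrm{C}^{0,\gamma}(\Omega\times\Omega)$.

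The only noteworthy difference is in how analyticity of $t\mapsto k(t,\cdot,\cdot)$ is established. Your idea of ``composing with the two-sided factorization'' can indeed be made precise: fixing $s>0$, the maps $\Phi(x)=k(s,x,\cdot)$ and $\Psi(y)=k(s,\cdot,y)$ lie in $\mathrm{C}^{0,\gamma}(\overline{\Omega};L^2(\Omega))$, and one checks directly that $S\mapsto\bigl[(x,y)\mapsto\langle S\Psi(y),\Phi(x)\rangle\bigr]$ is a bounded linear map from $\mathscr{L}(L^2(\Omega))$ to $\mathrm{C}^{0,\gamma}(\Omega\times\Omega)$; composing with the analytic $t\mapsto T(t-2s)$ gives the claim. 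The paper instead avoids constructing this ``kernel-extraction'' map: having already shown that $\{k(z,\cdot,\cdot):z\in\Sigma_{\theta,\tau_1,\tau_2}\}$ is bounded in $\mathrm{C}^{0,\gamma}(\Omega\times\Omega)$, it observes that $z\mapsto\int_{\Omega\times\Omega}k(z,x,y)\setone_A(x)\setone_B(y)\,\dx(x,y)=\scalar{T(z)\setone_A}{\setone_B}$ is holomorphic for all measurable $A,B\subset\Omega$, and since these functionals separate points of $\mathrm{C}^{0,\gamma}(\Omega\times\Omega)$, weak holomorphy plus local boundedness gives strong holomorphy. Both work; the paper's version is slightly cleaner in that it bypasses the explicit operator-to-kernel map, while yours is perhaps more constructive.
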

\begin{proof}
	Let $\omega$ be so large that $a_{L,\beta}^\omega$ is coercive.
	Then in particular $\lambda \coloneqq -\omega \in \rho(A)$.
	By Theorem~\ref{thm:robin_reg}
	there exists $m \in \mathds{N}$ and $\gamma > 0$ such that
	\[
		R(\lambda,A)^mL^2(\Omega) \subset \mathrm{C}^{0,\gamma}(\Omega).
	\]
	Since $(T(t))_{t \ge 0}$ is holomorphic, this implies that $T(t)$ maps
	$L^2(\Omega)$ boundedly to $\mathrm{C}^{0,\gamma}(\overline{\Omega})$ for every $t > 0$.

	Let $\phi_{\mathrm{hol}}$ be the sector of holomorphy of $(T(t))_{t \ge 0}$
	and fix $0 < \theta < \phi_{\mathrm{hol}}$.
	Let $k(z,\cdot,\cdot)$ denote the kernel of $T(z)$ for $z \in \Sigma_\theta$,
	and let $0 < \tau_1 < \tau_2$.
	Define
	\[
		\Sigma_{\theta,\tau_1,\tau_2} \coloneqq \{ z \in \mathds{C} : z - \tau_1 \in \Sigma_\theta \text{ and } |z| < \tau_2 \}.
	\]
	Since $k(t,\cdot,\cdot) \in L^\infty(\Omega \times \Omega)$,
	there exists a constant $K > 0$ that depends only on the semigroup
	and the set $\Sigma_{\theta,\tau_1,\tau_2}$ such that
	for all $z \in \Sigma_{\theta,\tau_1,\tau_2}$ and almost every
	$y \in \Omega$ we have
	\begin{align*}
		\left\| k(z,\cdot,y) \right\|_{\mathrm{C}^{0,\gamma}(\Omega)}
			& = \left\| T(\tfrac{\tau_1}{2}) T(z-\tau_1) k(\tfrac{\tau_1}{2},\cdot,y) \right\|_{\mathrm{C}^{0,\gamma}(\Omega)} \\
			& \le \left\| T(\tfrac{\tau_1}{2}) \right\|_{\mathscr{L}(L^2(\Omega), \mathrm{C}^{0,\gamma}(\Omega))}
				\left\| T(z-\tau_1) \right\|_{\mathscr{L}(L^2(\Omega))}
				\left\| k(\tfrac{\tau_1}{2},\cdot,y) \right\|_{L^2(\Omega)} \\
			& \le K
	\end{align*}
	Using a duality argument, we can estimate
	$\| k(z,x,\cdot) \|_{\mathrm{C}^{0,\gamma}(\Omega)}$ in a similar manner,
	possibly increasing the value of $K$ appropriately.
	Thus
	\begin{equation}\label{eq:sephoelder_jointhoelder}
		| k(z,x,y) - k(z,\bar{x},\bar{y}) |
			\le K | x - \bar{x} |^\gamma + K | y - \bar{y} |^\gamma
			\le 2K \left| \begin{smallpmatrix} x - \bar{x} \\ y - \bar{y} \end{smallpmatrix} \right|_\infty^\gamma
	\end{equation}
	for almost every $x$, $\bar{x}$, $y$ and $\bar{y}$ in $\Omega$,
	which shows that $\{ k(z,\cdot,\cdot) : z \in \Sigma_{\theta,\tau_1,\tau_2} \}$
	is a bounded subset of $\mathrm{C}^{0,\gamma}(\Omega \times \Omega)$.

	Since $(T(z))_{z \in \Sigma_\theta}$ is holomorphic on $L^2(\Omega)$,
	\[
		\scalar[L^2(\Omega)]{T(z)\setone_A}{\setone_B}
			= \int_{\Omega \times \Omega} k(z,x,y) \setone_{A \times B} \dx(x,y)
	\]
	is holomorphic for all measurable subsets $A$ and $B$ of $\Omega$.
	Considering integration against $\setone_{A \times B}$ as a functional
	on $\mathrm{C}^{0,\gamma}(\Omega \times \Omega)$, we see that the mapping
	$z \mapsto k(z,\cdot,\cdot)$ is holomorphic from $\Sigma_{\theta,\tau_1,\tau_2}$
	to $\mathrm{C}^{0,\gamma}(\Omega \times \Omega)$,
	because the linear combinations of such indicators separate the points of
	$\mathrm{C}^{0,\gamma}(\Omega \times \Omega)$,
	compare~\cite[Theorem~A.7]{ABHN01}.
	Since $\tau_1$ and $\tau_2$ are arbitrary, the first assertion follows.
	The second assertion is an easy consequence of the first.
\end{proof}

Next we show that $(T(t))_{t \ge 0}$ restricts to a 
$\mathrm{C}_0$-semigroup on $\mathrm{C}(\overline{\Omega})$.
For this we need the following density result.
\begin{lemma}\label{lem:robin_dense}
	Assume that $a_{L,\beta}$ is coercive and let $\gamma$ be as in
	Theorem~\ref{thm:robin_reg}.
	For all $v \in \mathrm{C}^\infty(\overline{\Omega})$ and all $\eps > 0$
	there exists $\psi \in \mathrm{C}^\infty(\overline{\Omega})$ such that
	the unique solution $u$ of~\eqref{eq:robin_problem} for the right-hand side
	$f_0 \coloneqq \psi$, $f_j \coloneqq 0$, $j=1,\dots,N$, and $g \coloneqq 0$
	satisfies $\|u-v\|_{\mathrm{C}^{0,\gamma}(\Omega)} < \eps$.
\end{lemma}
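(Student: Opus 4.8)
The plan is to show that the solution operator $R$ sending $\psi \in L^2(\Omega)$ to the unique solution $u$ of~\eqref{eq:robin_problem} with right-hand side $(\psi, 0, \dots, 0, 0)$ has dense range in $\mathrm{C}(\overline{\Omega})$ when we equip the image with the $\mathrm{C}^{0,\gamma}$-norm. Since $\mathrm{C}^\infty(\overline{\Omega})$ is dense in $\mathrm{C}^{0,\gamma}(\Omega)$, it suffices to approximate a given $v \in \mathrm{C}^\infty(\overline{\Omega})$ by some $u = R\psi$ with $\psi \in \mathrm{C}^\infty(\overline{\Omega})$; and it suffices to approximate in $\mathrm{C}^{0,\gamma'}(\Omega)$ for some $\gamma' \ge \gamma$, or more conveniently to first get $L^2$-density and then bootstrap. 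The cleanest route is a Hahn–Banach argument: if $R$ does not have dense range in $\mathrm{C}(\overline{\Omega})$, there is a nonzero Radon measure $\mu$ on $\overline{\Omega}$ annihilating $R\,\mathrm{C}^\infty(\overline{\Omega})$; I would derive a contradiction by testing against this measure.

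First I would note that, since $a_{L,\beta}$ is coercive, $A$ (the operator associated with $a_{L,\beta}$) is invertible and $R = A^{-1}$ on the relevant data, so $R$ maps $L^2(\Omega)$ into $H^1(\Omega)$ and, by Theorem~\ref{thm:robin_reg}, $R^m$ maps $L^2(\Omega)$ into $\mathrm{C}^{0,\gamma}(\Omega)$ for a suitable $m$. Next, the key density input at the Hilbert-space level: the range of $A^{-1}$ is $D(A)$, which is dense in $L^2(\Omega)$, and one checks that $D(A)$ (or iterates thereof) meets $\mathrm{C}^\infty(\overline{\Omega})$ in a set still dense in $L^2$; actually it is simpler to observe directly that $R\,\mathrm{C}^\infty(\overline{\Omega}) \supset A^{-1}\mathrm{C}^\infty(\overline{\Omega})$ is $L^2$-dense because $\mathrm{C}^\infty(\overline{\Omega})$ is $L^2$-dense and $A^{-1}$ is bounded with dense range. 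Now take $v \in \mathrm{C}^\infty(\overline{\Omega})$ and $\eps > 0$. By the iterated-regularity remark, $v$ lies in $\mathrm{C}^{0,\gamma}(\Omega)$; I would approximate $v$ in $L^2$ by $u_n = R\psi_n$ with $\psi_n \in \mathrm{C}^\infty(\overline{\Omega})$, then apply $R$ again: $R u_n = R^2 \psi_n \to R v$ in a smoother space, and iterating $m$ times moves the convergence into $\mathrm{C}^{0,\gamma}(\Omega)$. The point is that applying $R$ repeatedly upgrades $L^2$-convergence to $\mathrm{C}^{0,\gamma}$-convergence by Theorem~\ref{thm:robin_reg}, so $R^{m+1}\psi_n \to R^{m+1}(\text{something})$; one then has to identify the limit as $v$, which forces choosing $\psi_n$ so that $R^m \psi_n \to v$ in $L^2$ in the first place — again possible by density of $\mathrm{C}^\infty$ and boundedness-with-dense-range of $R^m$. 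Finally, since $R^{m+1}\psi_n = R(R^m\psi_n)$ and $R^m\psi_n \in \mathrm{C}^\infty(\overline{\Omega})$ is not automatic, I would instead take $\psi_n \in \mathrm{C}^\infty(\overline{\Omega})$ with $\psi_n \to A^{m}v$ in $L^2$ (legitimate since $A^m v$ is just some $L^2$ function and $\mathrm{C}^\infty$ is dense), so that $u_n := R^m \psi_n = A^{-m}\psi_n \to A^{-m}A^m v = v$ in $L^2$, hence $R u_n = A^{-1}u_n \to v$ after one more regularizing step lands us in $\mathrm{C}^{0,\gamma}$; setting $\psi := $ the data producing $u_n$ after the appropriate number of applications gives the claim.

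The main obstacle — and the place requiring care — is bookkeeping the number of applications of $R$ needed to convert $L^2$-convergence into $\mathrm{C}^{0,\gamma}(\Omega)$-convergence, and ensuring that at the outermost level the function we call $\psi$ is genuinely in $\mathrm{C}^\infty(\overline{\Omega})$ rather than merely in $H^1$ or $\mathrm{C}^{0,\gamma}$. The resolution is that we are free to choose the \emph{innermost} datum in $\mathrm{C}^\infty(\overline{\Omega})$ (density of $\mathrm{C}^\infty$ in $L^2$), and then $u$ is defined as the solution with right-hand side that innermost $\mathrm{C}^\infty$ function; the intermediate iterates need not be smooth, only the final approximant $u$ must lie close to $v$ in $\mathrm{C}^{0,\gamma}(\Omega)$, which Theorem~\ref{thm:robin_reg} delivers once enough resolvent powers have been applied. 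Coercivity of $a_{L,\beta}$ is used throughout to guarantee $0 \in \rho(A)$ so that $R = A^{-1}$ is a genuine bounded operator with the mapping properties quoted, and that uniqueness of the solution of~\eqref{eq:robin_problem} holds so that $R$ is well defined and linear.
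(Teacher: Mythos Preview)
Your argument has a genuine gap at the step where you write ``take $\psi_n \in \mathrm{C}^\infty(\overline{\Omega})$ with $\psi_n \to A^m v$ in $L^2$ (legitimate since $A^m v$ is just some $L^2$ function).'' This is not legitimate: for a general $v \in \mathrm{C}^\infty(\overline{\Omega})$, the function $v$ is \emph{not} in $D(A)$, because membership in $D(A)$ forces the Robin boundary condition $\tfrac{\partial v}{\partial\nu_L} + \beta v = 0$ in the weak sense, which a generic smooth $v$ does not satisfy. So $A v$, let alone $A^m v$, is undefined, and the whole bootstrapping scheme collapses. The same obstruction undermines the earlier variant: you correctly observe that $R\,\mathrm{C}^\infty(\overline{\Omega})$ is $L^2$-dense, but applying further powers of $R$ to upgrade the convergence shifts the target from $v$ to $R v$, $R^2 v$, etc., and you never recover an approximation of $v$ itself in $\mathrm{C}^{0,\gamma}$. (Indeed, your sentence ``$R u_n = A^{-1}u_n \to v$'' is incorrect; the limit is $A^{-1}v$, not $v$.)

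This boundary defect is exactly the difficulty the lemma is designed to overcome, and the paper handles it constructively rather than by abstract density. Given $v$, one computes $a_{L,\beta}(u-v,\phi) = \int_\Omega \psi\phi - a_{L,\beta}(v,\phi)$, and the problem is that $a_{L,\beta}(v,\phi)$ contains the boundary term $\int_{\partial\Omega}\beta v\,\phi$ which does not come from an $L^2(\Omega)$ right-hand side. The paper absorbs this term by choosing a smooth vector field $h$ on $\mathds{R}^N$ whose normal trace $h\cdot\nu$ approximates $\beta v$ in $L^p(\partial\Omega)$; then, via the divergence theorem, $\int_{\partial\Omega}(h\cdot\nu)\phi$ is traded for volume integrals. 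With $\psi \coloneqq k_0 + \sum_j D_jk_j + \Div h$ for suitable compactly supported $k_j$, the residual equation for $u-v$ has data $(\tilde{f}_0,\tilde{f}_j,\tilde{g})$ that are small in $L^p\times L^p\times L^p(\partial\Omega)$, and part~\eqref{thm:robin_reg:inv_C} of Theorem~\ref{thm:robin_reg} then gives $\|u-v\|_{\mathrm{C}^{0,\gamma}} < \eps$. The key idea you are missing is this explicit ``correction'' of the boundary term; without it, no amount of resolvent iteration will place a generic smooth $v$ in the range of $R$.
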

\begin{proof}
	Let $\tilde{\eps} > 0$ and $p > N$ be arbitrary, and let
	$h_d$ be in $\mathrm{C}^\infty(\mathds{R}^N;\mathds{R}^N)$ such that $h_d \cdot \nu \ge 1$
	almost everywhere on $\partial\Omega$. Such a vector field $h_d$ exists, see~\cite[Lemma~3.2]{Daners09}.
	By the Stone-Weierstrass theorem we can find a smooth vector field
	$h \in \mathrm{C}^\infty(\mathds{R}^N;\mathds{R}^N)$ such that
	\[
		\bigl\| h - \tfrac{\beta v h_d}{h_d \cdot \nu} \bigr\|_{L^p(\partial\Omega;\mathds{R}^N)} < \tilde{\eps}.
	\]
	Hence $\tilde{g} \coloneqq h \cdot \nu - \beta v$ satisfies
	$\| \tilde{g} \|_{L^p(\partial\Omega)} < \tilde{\eps}$.
	Since the test functions are dense in $L^p(\Omega)$, there exist
	$k_0$, $k_1$, \dots, $k_N$ in $\mathrm{C}^\infty_c(\Omega)$ such that
	the functions
	\[
		\tilde{f}_0 \coloneqq k_0 - \sum_{i=1}^N c_i D_iv - dv,
		\quad
		\tilde{f}_j \coloneqq -h_j - k_j - \sum_{i=1}^N a_{ij} D_iv - b_j v,
	\]
	$j=1,\dots,N$, satisfy $\|\tilde{f}_j\|_{L^p(\Omega)} < \tilde{\eps}$ for $j=0,\dots,N$.
	Define $\psi \in \mathrm{C}^\infty(\overline{\Omega})$ by
	\[
		\psi \coloneqq k_0 + \sum_{j=1}^N D_jk_j + \Div(h),
	\]
	and let $u$ be the unique solution of~\eqref{eq:robin_problem}
	as described in the claim. By the divergence theorem,
	\begin{align*}
		\int_\Omega \psi \phi \; \dx\lambda
			= \int_\Omega k_0 \phi \; \dx\lambda
				+ \int_{\partial\Omega} (h \cdot \nu) \phi \; \dx\sigma
				- \sum_{j=1}^N \int_\Omega (h_j + k_j) D_j\phi \; \dx\lambda
	\end{align*}
	for every $\phi \in \mathrm{C}^1(\overline{\Omega})$, hence
	\begin{align*}
		a_{L,\beta}(u - v, \phi)
			& = \int_\Omega \psi \phi \; \dx\lambda - a_{L,\beta}(v,\phi) \\
			& = \int_\Omega \tilde{f}_0 \phi \; \dx\lambda + \sum_{j=1}^N \int_\Omega \tilde{f}_j D_j\phi \; \dx\lambda + \int_{\partial\Omega} \tilde{g}\phi \; \dx\sigma
	\end{align*}
	for all $\phi \in \mathrm{C}^1(\overline{\Omega})$.
	Thus part~\eqref{thm:robin_reg:inv_C} of Theorem~\ref{thm:robin_reg} implies that
	\[
		\| u - v \|_{\mathrm{C}^{0,\gamma}(\Omega)}
			\le c (N+2) \tilde{\eps}
	\]
	for some constant $c$ that does not depend on $v$, $\psi$, $\eps$ or $\tilde{\eps}$.
	Now if we pick $\tilde{\eps}$ small enough such that $c (N+2) \tilde{\eps} < \eps$,
	the claim follows.
\end{proof}

\begin{theorem}\label{thm:robin_semigroup}
	The restriction of $(T(t))_{t \ge 0}$ to $\mathrm{C}(\overline{\Omega})$
	is a positive, compact, holomorphic $\mathrm{C}_0$-semigroup.
\end{theorem}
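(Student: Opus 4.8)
The plan is to show that $(T(t))_{t\ge 0}$ leaves $\mathrm{C}(\overline{\Omega})$ invariant, restricts there to a semigroup of bounded operators, and that this restriction is strongly continuous; the positivity, compactness and holomorphy then transfer essentially for free. First I would fix $\omega$ so large that $a_{L,\beta}^\omega$ is coercive, so that $\lambda:=-\omega\in\rho(A)$ and the resolvent $R(\lambda,A)$ is the solution operator of~\eqref{eq:robin_problem:omega}. By Theorem~\ref{thm:robin_reg}\eqref{thm:robin_reg:inv_C} applied with $q=N/2$, a sufficiently high power $R(\lambda,A)^m$ maps $L^2(\Omega)$ into $\mathrm{C}^{0,\gamma}(\Omega)\subset\mathrm{C}(\overline{\Omega})$; since $(T(t))_{t\ge 0}$ is holomorphic, $T(t)=T(t/2m)^{2m}$ and $T(t/2m)$ maps $L^2(\Omega)$ into $D(A^m)$, so each $T(t)$, $t>0$, maps $L^2(\Omega)$ boundedly into $\mathrm{C}(\overline{\Omega})$. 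In particular $T(t)\mathrm{C}(\overline{\Omega})\subset\mathrm{C}(\overline{\Omega})$ for $t>0$. Because $T(t)$ is a kernel operator with bounded kernel and $\mathrm{C}(\overline{\Omega})\hookrightarrow L^2(\Omega)\hookrightarrow L^1(\Omega)$ with $\mathrm{C}(\overline{\Omega})$ mapped into $L^\infty$, the restriction $T_\infty(t):=T(t)|_{\mathrm{C}(\overline{\Omega})}$ is a bounded operator on $\mathrm{C}(\overline{\Omega})$, uniformly bounded on compact subsets of $(0,\infty)$ by the kernel estimate (or by the $\mathscr{L}(L^\infty)$-bound from the Gaussian estimate together with the continuity just proved). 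Setting $T_\infty(0):=I$, the semigroup law on $(0,\infty)$ is inherited from $L^2$, and the algebraic semigroup law at $0$ is trivial.

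The genuine work is strong continuity at $t=0$ on $\mathrm{C}(\overline{\Omega})$. Here I would invoke Lemma~\ref{lem:robin_dense}: assuming first that $a_{L,\beta}$ itself is coercive, that lemma produces, for each $v\in\mathrm{C}^\infty(\overline{\Omega})$, a function of the form $u=R(0,A)\psi=A^{-1}\psi$ with $\psi\in\mathrm{C}^\infty(\overline{\Omega})\subset L^\infty(\Omega)$, as close to $v$ as we like in $\mathrm{C}^{0,\gamma}(\Omega)$, hence in $\mathrm{C}(\overline{\Omega})$. Such $u$ lie in $D(A_\infty)$, the part of $A$ in $\mathrm{C}(\overline{\Omega})$, because $Au=\psi\in\mathrm{C}(\overline{\Omega})$; and for $u\in D(A_\infty)$ one has $T_\infty(t)u-u=\int_0^t T_\infty(r)A_\infty u\,\dx r\to 0$ in $\mathrm{C}(\overline{\Omega})$ as $t\downarrow 0$, using the local-uniform boundedness of $T_\infty(r)$ on $L^\infty$ and dominated convergence (the integrand converges pointwise a.e.\ by $L^2$-strong continuity along a subsequence, but it is cleaner to argue that $T_\infty(r)\psi\to\psi$ in $\mathrm{C}(\overline{\Omega})$ as $r\downarrow 0$ — which is exactly what we are trying to prove — so instead I would run the standard argument: the orbit $r\mapsto T(r)\psi$ is continuous into $L^2$, bounded into $L^\infty$, hence continuous into $L^p$ for every finite $p$; feeding this through one application of $R(\lambda,A)^m$ and Theorem~\ref{thm:robin_reg} promotes it to continuity into $\mathrm{C}(\overline{\Omega})$ on $(0,\infty)$, and then right-continuity at $0$ follows from $T(t)u-u=-\lambda\int_0^t T(r)u\,\dx r+\int_0^t T(r)R(\lambda,A)^{-1}\cdots$; in practice the slick route is: $T_\infty(t)u\to u$ for $u$ in the dense set of Lemma~\ref{lem:robin_dense}, and $\sup_{t\in(0,1]}\|T_\infty(t)\|_{\mathscr L(\mathrm{C}(\overline\Omega))}<\infty$, which together give strong continuity on all of $\mathrm{C}(\overline{\Omega})$ by a $3\eps$-argument). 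The coercivity hypothesis on $a_{L,\beta}$ is removed at the end by replacing $L$ with $L+\omega$: $T(t)=\e^{-\omega t}S(t)$ where $S$ is the semigroup for the coercive form, and multiplication by the scalar $\e^{-\omega t}$ preserves all the asserted properties.

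Finally, positivity of $T_\infty(t)$ is immediate since it agrees with the positive operator $T(t)$ on the positive cone of $\mathrm{C}(\overline{\Omega})\subset L^2(\Omega)$; holomorphy of $(T_\infty(t))_{t\ge 0}$ follows because $z\mapsto T(z)$ is holomorphic into $\mathscr{L}(L^2)$ and, by the same resolvent-power/Theorem~\ref{thm:robin_reg} bootstrap applied on the sector $\Sigma_{\theta,\tau_1,\tau_2}$ as in the preceding theorem, also locally bounded into $\mathscr{L}(\mathrm{C}(\overline{\Omega}))$, so Vitali's theorem upgrades weak holomorphy (tested against point evaluations, which separate points of $\mathrm{C}(\overline{\Omega})$) to holomorphy in $\mathscr{L}(\mathrm{C}(\overline{\Omega}))$; compactness of $T_\infty(t)$ for $t>0$ follows from factoring $T_\infty(t)=T_\infty(t/2)\circ(T(t/2)|_{\mathrm{C}(\overline{\Omega})})$ where $T(t/2)\colon\mathrm{C}(\overline{\Omega})\to\mathrm{C}^{0,\gamma}(\Omega)$ is bounded and $\mathrm{C}^{0,\gamma}(\Omega)\hookrightarrow\mathrm{C}(\overline{\Omega})$ is compact (Arzel\`a–Ascoli). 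I expect the main obstacle to be the strong-continuity step: one must carefully combine the dense range supplied by Lemma~\ref{lem:robin_dense} with a uniform $\mathscr{L}(\mathrm{C}(\overline{\Omega}))$-bound for $T_\infty(t)$ near $t=0$, the latter coming either directly from the Gaussian kernel bound (which gives $\|T(t)\|_{\mathscr L(L^\infty)}\le M$ for small $t$) or from interpolating the $L^2$ and $\mathrm{C}^{0,\gamma}$ mapping properties; everything else is routine transfer of structural properties.
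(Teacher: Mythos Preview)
Your proposal is correct and uses exactly the same two key ingredients as the paper: the resolvent-power regularity $R(\lambda,A)^m L^2(\Omega)\subset\mathrm{C}^{0,\gamma}(\Omega)$ from Theorem~\ref{thm:robin_reg}, and the density statement of Lemma~\ref{lem:robin_dense} showing that the part of $A$ in $\mathrm{C}(\overline{\Omega})$ is densely defined. The difference is only in packaging the final step. You reconstruct strong continuity by hand via the $3\eps$-argument (dense set from Lemma~\ref{lem:robin_dense}, uniform $L^\infty$-bound from the Gaussian estimate, and $\|T(t)u-u\|_\infty\le Mt\|Au\|_\infty$ for $u$ in the dense set), then obtain holomorphy separately via Vitali and compactness via Arzel\`a--Ascoli. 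The paper instead observes that the extrapolated semigroup on $L^\infty(\Omega)$ is already holomorphic in the sense of~\cite[Definition~3.7.1]{ABHN01} (a consequence of the Gaussian estimates~\cite[Theorem~5.4]{AtE97}), uses Lemma~\ref{lem:robin_dense} to see that the part of $A$ in $\mathrm{C}(\overline{\Omega})$ is densely defined, and then invokes the abstract machinery~\cite[Proposition~3.7.4 and Remark~3.7.13]{ABHN01} which says precisely that a bounded holomorphic semigroup restricted to the closure of the domain of its generator is a holomorphic $\mathrm{C}_0$-semigroup. This yields strong continuity and holomorphy in one stroke, and compactness follows by factoring through $L^2(\Omega)$ rather than through $\mathrm{C}^{0,\gamma}$. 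Your route is perfectly valid and more self-contained; the paper's is shorter because it outsources the analytic details to~\cite{ABHN01}.
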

\begin{proof}
	Let $\omega$ be such that $a_{L,\beta}^\omega$ is coercive.
	Then in particular $\lambda \coloneqq -\omega \in \rho(A)$.
	By Theorem~\ref{thm:robin_reg}
	there exists $m \in \mathds{N}$ and $\gamma > 0$ such that
	\[
		R(\lambda,A)^mL^2(\Omega) \subset \mathrm{C}^{0,\gamma}(\Omega).
	\]
	Since $(T(t))_{t \ge 0}$ is holomorphic, this implies that $T(t)$ maps
	$L^2(\Omega)$ boundedly to $\mathrm{C}^{0,\gamma}(\overline{\Omega})$ for every $t \ge 0$.
	In particular, the subspace $\mathrm{C}(\overline{\Omega})$ is invariant
	under $T(t)$, and factoring through $L^2(\Omega)$ we see that $T(t)$ is
	a compact operator on $\mathrm{C}(\overline{\Omega})$.
	Positivity follows from the positivity on $L^2(\Omega)$.

	As was already remarked,
	the restriction of $(T(t))_{t \ge 0}$ to $L^\infty(\Omega)$ is
	a holomorphic semigroup in the sense of~\cite[Definition~3.7.1]{ABHN01}.
	Its generator is the part of $A$ in $L^\infty(\Omega)$.
	Since $\mathrm{C}^\infty(\overline{\Omega})$ is dense in $\mathrm{C}(\overline{\Omega})$,
	Lemma~\ref{lem:robin_dense} shows that the part of $A + \omega$ in $\mathrm{C}(\overline{\Omega})$
	and hence also the part of $A$ in $\mathrm{C}(\overline{\Omega})$ is densely defined.
	Thus, by~\cite[Proposition~3.7.4 and Remark~3.7.13]{ABHN01}, the restriction of
	$(T(t))_{t \ge 0}$ to $\mathrm{C}(\overline{\Omega})$ is a holomorphic
	$\mathrm{C}_0$-semigroup, whose generator is the part of $A$ in $\mathrm{C}(\overline{\Omega})$.
\end{proof}

\subsection{Wentzell-Robin boundary conditions}
Let $A$ be the operator on the Hilbert space $\mathcal{H} \coloneqq L^2(\Omega) \oplus L^2(\partial\Omega)$
that is associated with the form
\[
	\mathfrak{a}_{L,\beta}( (u,u|_{\partial\Omega}), (v,v|_{\partial\Omega}) )
		\coloneqq a_{L,\beta}(u,v)
\]
with the dense form domain
\[
	\mathcal{V} \coloneqq \{ (u, u|_{\partial\Omega}) : u \in H^1(\Omega) \}
		\subset \mathcal{H}.
\]
It follows from the theory of forms that $-A$ generates
a positive, compact, holomorphic $\mathrm{C}_0$-semigroup $(T(t))_{t \ge 0}$ on $\mathcal{H}$.
This semigroup, or more precisely its restriction to $\mathcal{V}$, describes
the solutions of the evolution problem~\eqref{eq:wentzell_parabolic} with Wentzell-Robin
boundary conditions, compare~\cite{AMPR03}.

We need to show that $(T(t))_{t \ge 0}$ extrapolates to $\mathrm{C}(\overline{\Omega})$.
An easy sufficient condition is quasi-$L^\infty$-contractivity,
i.e., to assume that the semigroup $(\e^{-\omega t}T(t))_{t \ge 0}$
is $L^\infty$-contractive for some $\omega \in \mathds{R}$.
However, this cannot be expected in general, even if $\Omega$ is an
interval and $L$ is formally self-adjoint and has regular second-order
coefficients, as the following example shows.

\begin{example}\label{ex:nonsubmark}
	Consider the operator
	\[
		(Lu)(x) = -(u'(x) + \sgn(x)u(x))' + \sgn(x)u(x)
	\]
	on $\Omega = (-1,1)$ with Wentzell-Robin boundary conditions,
	i.e., $a = 1$, $b = c = \sgn$, $d=0$, and $\beta$ arbitrary.
	There exists no $\omega \in \mathds{R}$ such that the semigroup
	$\e^{-\omega t} T(t)$ consists of contractions on
	$L^\infty(\Omega) \oplus L^\infty(\partial\Omega)$.
\end{example}
\begin{proof}
	Assume that $\e^{-\omega t}T(t)$ is contractive on
	$L^\infty(\Omega) \oplus L^\infty(\partial\Omega)$ for all $t \ge 0$.
	This semigroup comes from the form $\mathfrak{a}_{L,\beta}^\omega$,
	which is defined by
	\begin{align*}
		& \mathfrak{a}_{L,\beta}^\omega( (u,u|_{\partial\Omega}), (v,v|_{\partial\Omega}) ) \\
			& \qquad \coloneqq \int_{-1}^1 \bigl(u'(x) v'(x) + \sgn(x) u(x) v'(x) + \sgn(x) u'(x) v(x) + \omega u(x) v(x)\bigr) \dx x \\
				& \qquad\qquad + \bigl(\beta(-1) + \omega\bigr) u(-1) v(-1) + \bigl(\beta(1) + \omega\bigr) u(1)v(1)
	\end{align*}
	for $(u,u|_{\partial\Omega})$ and $(v,v|_{\partial\Omega})$ in $\mathcal{V}$.
	Assume that there exists $\omega \in \mathds{R}$ such that $\e^{-\omega t}T(t)$ is
	$L^\infty$-contractive for all $t \ge 0$.
	By~\cite[Theorem~2.15]{Ouhabaz05} this implies that
	\[
		\mathfrak{a}_{L,\beta}^\omega\bigl( (v, v|_{\partial\Omega}), (w, w|_{\partial\Omega}) \bigr) \ge 0
		\text{ with } v \coloneqq (1 \wedge |u|) \sgn(u) \text{ and } w \coloneqq (|u|-1)^+ \sgn(u)
	\]
	for all $(u,u|_{\partial\Omega}) \in \mathcal{V}$, hence in particular
	\[
		\int_{-1}^1 \bigl(\sgn(x) u'(x) + \omega u(x)\bigr) \setone_{\{u \ge 1\}} \dx x
			\ge 0
	\]
	for all $u \in H^1(-1,1)$ satisfying $u(-1) = u(1) = 0$ and $u \ge 0$.
	For $u_n(x) \coloneqq 2(1-x^2)^n$ we thus obtain for $\alpha_n \coloneqq (1 - 2^{-1/n})^{1/2}$
	\begin{align*}
		0 & \le \int_{-1}^1 \bigl(\sgn(x) u_n'(x) + \omega u_n(x)\bigr) \setone_{\{u_n \ge 1\}} \dx x \\
			& = -u_n\bigr|^0_{-\alpha_n} + u_n\bigr|^{\alpha_n}_0 + \omega \int_{-\alpha_n}^{\alpha_n} u_n \dx\lambda
			\le -2 + 4\omega \alpha_n.
	\end{align*}
	This is a contradiction since $\alpha_n \to 0$ as $n \to \infty$.
\end{proof}

However, if we assume some regularity of the coefficients
we obtain a quasi-submarkovian semigroup, i.e., a semigroup
such that $(\e^{-\omega t}T(t))_{t \ge 0}$ is positive and $L^\infty$-contractive
for some $\omega \in \mathds{R}$, as we show next.

\begin{proposition}\label{prop:submarkovian}.
	If $b_j \in W^{1,\infty}(\Omega)$ for $j=1,\dots,N$,
	then the Wentzell-Robin semigroup $(T(t))_{t \ge 0}$ is quasi-submarkovian
	on $\mathcal{H}$.
\end{proposition}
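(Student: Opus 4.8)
The plan is to verify the Beurling--Deny type criterion for positivity and $L^\infty$-contractivity of the Wentzell--Robin semigroup, using the form $\mathfrak{a}_{L,\beta}^\omega$ on $\mathcal{V} \subset \mathcal{H}$ for $\omega$ large. By~\cite[Theorem~2.15]{Ouhabaz05} (or the corresponding characterizations of positivity and $L^\infty$-contractivity in terms of the form), it suffices to show two things: first, that $(v, v|_{\partial\Omega}) \in \mathcal{V}$ and $\mathfrak{a}_{L,\beta}^\omega((v^+, v^+|_{\partial\Omega}), (v^-, v^-|_{\partial\Omega})) \le 0$ for all real $(v,v|_{\partial\Omega}) \in \mathcal{V}$ (positivity), and second, writing $v \coloneqq (1 \wedge |u|)\sgn(u)$ and $w \coloneqq (|u|-1)^+ \sgn(u)$ for $u$ real with $(u, u|_{\partial\Omega}) \in \mathcal{V}$, that $(v, v|_{\partial\Omega})$ and $(w, w|_{\partial\Omega})$ lie in $\mathcal{V}$ and $\mathfrak{a}_{L,\beta}^\omega((v, v|_{\partial\Omega}), (w, w|_{\partial\Omega})) \ge 0$ ($L^\infty$-contractivity). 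Both required memberships in $\mathcal{V}$ are immediate since $H^1(\Omega)$ is stable under these truncations and the truncation of the trace is the trace of the truncation, so no extra argument is needed there.

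For positivity, I would expand $\mathfrak{a}_{L,\beta}^\omega((v^+, \cdot), (v^-, \cdot)) = a_{L,\beta}^\omega(v^+, v^-)$. On the set where $v > 0$ we have $v^- = 0$, and on the set where $v < 0$ we have $v^+ = 0$, so all the zeroth-order terms, the boundary term $\int_{\partial\Omega} \beta v^+ v^-\,\dx\sigma$, and the $\omega$-term vanish pointwise, and likewise $\int_\Omega a_{ij} D_i v^+ D_j v^- \,\dx\lambda = 0$ because $\nabla v^+$ and $\nabla v^-$ have disjoint supports. The remaining first-order terms $\int_\Omega b_j v^+ D_j v^- \,\dx\lambda$ and $\int_\Omega c_i D_i v^+ v^- \,\dx\lambda$ also vanish for the same disjoint-support reason. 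Hence $a_{L,\beta}^\omega(v^+, v^-) = 0 \le 0$, which gives positivity. Note that this part does not even use the regularity of $b$.

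For $L^\infty$-contractivity, set $A_0 \coloneqq \{|u| < 1\}$, $A_1 \coloneqq \{|u| > 1\}$; then $\nabla v = \nabla u \,\setone_{A_0}$, $\nabla w = \nabla u \,\setone_{A_1}$, and $v = \sgn(u)$, $vw = (|u|-1)^+ \ge 0$ on $A_1$ while $w = 0$ on $A_0$. As in the positivity argument, $\int_\Omega a_{ij} D_i v D_j w \,\dx\lambda = 0$ and $\int_\Omega c_i D_i v \cdot w \,\dx\lambda = 0$ by disjointness of $A_0$ and $A_1$. The surviving terms are $\int_\Omega b_j v D_j w \,\dx\lambda + \int_\Omega d\,vw\,\dx\lambda + \int_{\partial\Omega} \beta v w \,\dx\sigma + \omega \int_\Omega vw\,\dx\lambda$, and since $vw \ge 0$ the last three are controlled by $\omega$: choosing $\omega$ large enough (depending on $\|d\|_\infty$ and $\|\beta\|_\infty$, the latter together with a standard trace/interpolation estimate bounding $\int_{\partial\Omega} vw\,\dx\sigma$ by $\eps\|vw\|_{H^1} + C_\eps \|vw\|_{L^1}$, or more simply since $vw \le w^2 + \text{const}$) makes their sum nonnegative. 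The one genuinely new term is $\int_\Omega b_j v D_j w\,\dx\lambda = \int_{A_1} b_j \sgn(u) D_j(|u|-1)\,\dx\lambda = \int_{A_1} b \cdot \nabla |u| \,\dx\lambda$. This is where I expect the main obstacle: without any regularity of $b$ this term has no sign (as Example~\ref{ex:nonsubmark} shows, the conclusion genuinely fails then), but under the hypothesis $b_j \in W^{1,\infty}(\Omega)$ we may integrate by parts, writing $\int_{A_1} b \cdot \nabla|u|\,\dx\lambda = \int_\Omega b \cdot \nabla w\,\dx\lambda = -\int_\Omega (\Div b)\,w\,\dx\lambda + \int_{\partial\Omega} (b\cdot\nu)\,w\,\dx\sigma$. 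Since $\Div b \in L^\infty(\Omega)$ and $b\cdot\nu \in L^\infty(\partial\Omega)$ and $w \ge 0$, the first of these is again absorbed into the $\omega$-term and the second into the $\beta$-boundary estimate, upon enlarging $\omega$ once more. Collecting all the bounds, $\mathfrak{a}_{L,\beta}^\omega((v,\cdot),(w,\cdot)) \ge 0$ for $\omega$ sufficiently large, which completes the verification and hence shows that $(\e^{-\omega t}T(t))_{t\ge 0}$ is positive and $L^\infty$-contractive, i.e., $(T(t))_{t\ge 0}$ is quasi-submarkovian on $\mathcal{H}$.
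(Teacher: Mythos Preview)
Your approach is essentially the paper's: verify the Beurling--Deny criteria from \cite{Ouhabaz05}, observe that only the drift term $\int_\Omega b_j v D_j w$ survives with the wrong sign, and integrate it by parts using $b \in W^{1,\infty}$. The paper streamlines the computation by first citing positivity from \cite[Theorem~2.6]{Ouhabaz05} and then invoking \cite[Corollary~2.17]{Ouhabaz05}, which reduces the $L^\infty$-contractivity check to \emph{nonnegative} $u$, so that $v = 1 \wedge u$ and $w = (u-1)^+$ are manifestly nonnegative and no $\sgn(u)$ factors appear.

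There is, however, a genuine slip in your write-up. You correctly compute $\int_\Omega b_j v\,D_j w = \int_{A_1} b \cdot \nabla |u|$, but then claim this equals $\int_\Omega b \cdot \nabla w$. That is false in general: $\nabla w = \nabla u\,\setone_{A_1}$, whereas $\nabla|u|\,\setone_{A_1} = \sgn(u)\,\nabla u\,\setone_{A_1}$, and these differ where $u < -1$. The correct identity is $\int_{A_1} b\cdot\nabla|u| = \int_\Omega b \cdot \nabla (|u|-1)^+$; integrating \emph{that} by parts gives $-\int_\Omega (\Div b)\,(|u|-1)^+ + \int_{\partial\Omega}(b\cdot\nu)\,(|u|-1)^+$, and now the integrand $(|u|-1)^+ = vw \ge 0$ is nonnegative, so the argument goes through. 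Your assertion ``$w \ge 0$'' is wrong as written (take $u < -1$), but replacing $w$ by $(|u|-1)^+$ throughout that step fixes everything. Also, the trace/interpolation detour is unnecessary: in the Wentzell setting the perturbation $\omega$ enters via the $\mathcal{H}$-inner product, i.e., as $\omega\int_\Omega vw + \omega\int_{\partial\Omega} vw$, so both the $(d+\omega)$ volume term and the $(\beta+\omega)$ boundary term are controlled directly by choosing $\omega$ large, exactly as in the paper.
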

\begin{proof}\allowdisplaybreaks
	It follows from~\cite[Theorem~2.6]{Ouhabaz05} that $(T(t))_{t \ge 0}$ is positive.
	By assumption, there exists $k \ge 0$ such that $|b| \le k$
	and $\Div(b) \le k$ almost everywhere. Pick $\omega$ larger than 
	$\|d\|_\infty + k$ and $\|\beta\|_\infty + k$.
	Since $D_j u^+ = D_ju \setone_{\{u \ge 0\}}$, we obtain from the divergence theorem
	that for all $(u,u_{\partial\Omega}) \in \mathcal{V}$ satisfying $u \ge 0$
	we have
	\begin{align*}
		& \mathfrak{a}_{L,\beta}^\omega\bigl( (1 \wedge u, 1 \wedge u|_{\partial\Omega}), ( (u-1)^+, (u|_{\partial\Omega}-1)^+) \bigr) \\
			& = \int_\Omega b \, \nabla (u-1)^+ \; \dx\lambda + \int_\Omega (d+\omega) (u-1)^+ \; \dx\lambda + \int_{\partial\Omega} (\beta + \omega) (u-1)^+ \; \dx\sigma \\
			& = \int_{\partial\Omega} (u-1)^+ \, b \cdot \nu \; \dx\sigma - \int_\Omega (u-1)^+ \Div(b) \; \dx\lambda \\
			& \qquad + \int_\Omega (d+\omega) (u-1)^+ \; \dx\lambda  + \int_{\partial\Omega} (\beta + \omega) (u-1)^+ \; \dx\sigma \\
			& \ge \int_\Omega (u-1)^+ (\omega-\|d\|_\infty-k) \; \dx\lambda + \int_{\partial\Omega} (u-1)^+ (\omega-\|\beta\|_\infty-k) \; \dx\sigma
			\ge 0.
	\end{align*}
	It follows from~\cite[Corollary~2.17]{Ouhabaz05} that $(\e^{-\omega t}T(t))_{t \ge 0}$
	is submarkovian.
\end{proof}

We need a density result, which is similar to Lemma~\ref{lem:robin_dense},
in order to show that $(T(t))_{t \ge 0}$ restricts to a $\mathrm{C}_0$-semigroup
on $\mathrm{C}(\overline{\Omega})$.
\begin{lemma}\label{lem:wentzell_dense}
	Assume that $a_{L,\beta}$ is coercive and let $\gamma$ be as in Theorem~\ref{thm:robin_reg}.
	For all $v \in \mathrm{C}^\infty(\overline{\Omega})$ and all $\eps > 0$
	there exists $\psi \in \mathrm{C}^\infty(\overline{\Omega})$ such that
	the unique solution $u$ of~\eqref{eq:robin_problem} for the right-hand side
	$f_0 \coloneqq \psi$, $f_j \coloneqq 0$, $j=1,\dots,N$, and $g \coloneqq \psi|_{\partial\Omega}$
	satisfies $\|u-v\|_{\mathrm{C}^{0,\gamma}(\Omega)} < \eps$.
\end{lemma}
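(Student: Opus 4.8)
The plan is to mimic the proof of Lemma~\ref{lem:robin_dense} with the modifications needed to accommodate the boundary datum $g = \psi|_{\partial\Omega}$ instead of $g=0$. First I would note that the only obstacle is to produce, for a given $v \in \mathrm{C}^\infty(\overline{\Omega})$, a smooth $\psi$ such that the right-hand sides $f_0 = \psi$, $f_j = 0$, $g = \psi|_{\partial\Omega}$ generate through~\eqref{eq:robin_problem} a solution $u$ which is $\mathrm{C}^{0,\gamma}$-close to $v$. As in Lemma~\ref{lem:robin_dense}, the strategy is to compute $a_{L,\beta}(v,\phi)$ explicitly via the divergence theorem, identify the ``error'' data $\tilde f_0$, $\tilde f_j$, $\tilde g$ that would make $v$ itself the exact solution, observe that these are not smooth (or not of the required form) and hence must be approximated, and then invoke part~\eqref{thm:robin_reg:inv_C} of Theorem~\ref{thm:robin_reg} to control $\|u-v\|_{\mathrm{C}^{0,\gamma}(\Omega)}$ by the sizes of the approximation errors.

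Concretely, I would fix $\tilde\eps>0$ and $p>N$ and seek $\psi$ of the form $\psi = k_0 + \sum_{j=1}^N D_j k_j + \Div(h)$ with $k_0,\dots,k_N \in \mathrm{C}^\infty_c(\Omega)$ and $h \in \mathrm{C}^\infty(\mathds{R}^N;\mathds{R}^N)$, exactly as in Lemma~\ref{lem:robin_dense}. The key new point is that now $g = \psi|_{\partial\Omega}$, and since the $k_j$ have compact support in $\Omega$, their traces vanish; but $\Div(h)$ does not have vanishing trace, so $g|_{\partial\Omega} = (\Div h)|_{\partial\Omega}$. Using the divergence theorem as before, $\int_\Omega \psi\phi\,\dx\lambda = \int_\Omega k_0 \phi\,\dx\lambda + \int_{\partial\Omega}(h\cdot\nu)\phi\,\dx\sigma - \sum_j \int_\Omega (h_j+k_j) D_j\phi\,\dx\lambda$, so that
\[
	a_{L,\beta}(u-v,\phi) = \int_\Omega \tilde f_0 \phi\,\dx\lambda + \sum_{j=1}^N \int_\Omega \tilde f_j D_j\phi\,\dx\lambda + \int_{\partial\Omega}\bigl(h\cdot\nu - \beta v + (\Div h)|_{\partial\Omega}\bigr)\phi\,\dx\sigma
\]
with $\tilde f_0 = k_0 - \sum_i c_i D_i v - dv$ and $\tilde f_j = -h_j - k_j - \sum_i a_{ij}D_i v - b_j v$. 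The requirement becomes $\tilde g \coloneqq h\cdot\nu - \beta v + (\Div h)|_{\partial\Omega}$ small in $L^p(\partial\Omega)$, alongside $\|\tilde f_j\|_{L^p(\Omega)}$ small for $j=0,\dots,N$.

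The main obstacle is therefore the simultaneous smallness of $\tilde g$ and of the $\tilde f_j$: the vector field $h$ must be chosen so that $h\cdot\nu$ compensates $\beta v - (\Div h)|_{\partial\Omega}$ on $\partial\Omega$ while $h_j$ stays close to $-\sum_i a_{ij} D_i v - b_j v$ (up to the compactly supported corrections $k_j$, which are free in the interior and only matter near $\partial\Omega$). I would handle this by first fixing an auxiliary smooth vector field $h_0^{\mathrm{aux}}$ with $h_0^{\mathrm{aux}}\cdot\nu \ge 1$ on $\partial\Omega$ (available by~\cite[Lemma~3.2]{Daners09}), writing the desired normal trace as a smooth multiple of $h_0^{\mathrm{aux}}$, and then applying Stone--Weierstrass to approximate in $L^p(\partial\Omega;\mathds{R}^N)$ — exactly mirroring Lemma~\ref{lem:robin_dense}, but now iterating once to absorb the extra $(\Div h)|_{\partial\Omega}$ term, which is itself controlled once $h$ is fixed in a suitable space, e.g. by choosing $h$ in a fixed smooth class and observing that the map $h\mapsto \Div h|_{\partial\Omega}$ is continuous there. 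Once $\|\tilde f_j\|_{L^p(\Omega)}<\tilde\eps$ for $j=0,\dots,N$ and $\|\tilde g\|_{L^p(\partial\Omega)}<\tilde\eps$ are achieved, part~\eqref{thm:robin_reg:inv_C} of Theorem~\ref{thm:robin_reg} (with $q=N/2$, $\eps = p-N$, say) yields $\|u-v\|_{\mathrm{C}^{0,\gamma}(\Omega)} \le c(N+2)\tilde\eps$ for a constant $c$ independent of $v$, $\psi$, $\eps$, $\tilde\eps$, and choosing $\tilde\eps$ small enough finishes the proof.
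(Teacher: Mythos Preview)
Your approach follows Lemma~\ref{lem:robin_dense} too literally and thereby creates an unnecessary complication. In Lemma~\ref{lem:robin_dense} the constraint $g=0$ forces the boundary term $\int_{\partial\Omega}\beta v\,\phi\,\dx\sigma$ coming from $a_{L,\beta}(v,\phi)$ to be compensated by something produced from $\int_\Omega\psi\phi\,\dx\lambda$ alone; the vector field $h$ and the divergence theorem are needed precisely because that is the only mechanism turning an interior integral into a boundary one. Here the situation is easier: since $g=\psi|_{\partial\Omega}$, the term $\int_{\partial\Omega}\psi\phi\,\dx\sigma$ is already present on the right-hand side, so any smooth function on $\overline{\Omega}$ that is \emph{not} compactly supported contributes directly to the boundary. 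The paper exploits this by dropping $h$ entirely and writing $\psi=\tilde{k}_0+k_0+\sum_{j=1}^N D_jk_j$ with $\tilde{k}_0\in\mathrm{C}^\infty(\overline{\Omega})$ and $k_0,\dots,k_N\in\mathrm{C}^\infty_c(\Omega)$. Then $\psi|_{\partial\Omega}=\tilde{k}_0|_{\partial\Omega}$, and one simply chooses $\tilde{k}_0$ via Stone--Weierstrass so that $\tilde{g}\coloneqq(\tilde{k}_0-\beta v)|_{\partial\Omega}$ is small in $L^p(\partial\Omega)$. The compactly supported $k_j$ then handle the interior approximations exactly as before, and the argument closes with no extra boundary term.

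Your route, by contrast, leaves the term $(\Div h)|_{\partial\Omega}$ in $\tilde g$, and the proposed remedy --- ``iterating once'' and invoking continuity of $h\mapsto(\Div h)|_{\partial\Omega}$ --- is not a proof. Modifying $h$ to absorb $(\Div h)|_{\partial\Omega}$ changes $\Div h$ in turn, and you have not set up any contraction or fixed-point structure that would make such an iteration converge. Nor does continuity of $\Div$ on a ``fixed smooth class'' help: the Stone--Weierstrass step only approximates in $L^p(\partial\Omega;\mathds{R}^N)$, which gives no control over $\Div h$ at the boundary. This is a genuine gap in your argument, but it evaporates once you notice that the vector field is unnecessary in the Wentzell setting.
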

\begin{proof}
	Let $p > N$ and $\tilde{\eps} > 0$ be arbitrary.
	By the Stone-Weierstrass theorem there exists
	$\tilde{k}_0 \in \mathrm{C}^\infty(\overline{\Omega})$ such that
	$\tilde{g} \coloneqq (\tilde{k}_0 - \beta v)|_{\partial\Omega}$
	satisfies $\|\tilde{g}\|_{L^p(\partial\Omega)} < \tilde{\eps}$.
	Now pick test functions $k_j \in \mathrm{C}^\infty_c(\Omega)$, $j=1,\dots,N$
	such that
	\[
		\tilde{f}_0 \coloneqq \tilde{k}_0 + k_0 - \sum_{i=1}^N c_i D_iv - dv,
		\quad
		\tilde{f}_j \coloneqq k_j - \sum_{i=1}^N a_i D_i v - b_j v
	\]
	satisfy $\|\tilde{f}_j\|_{L^p(\Omega)} < \tilde{\eps}$ for $j=0,\dots,N$.
	Define
	\[
		\psi \coloneqq \tilde{k}_0 + k_0 + \sum_{j=1}^N k_j \in \mathrm{C}^\infty(\overline{\Omega})
	\]
	and let $u$ be the unique solution of~\eqref{eq:robin_problem} as described
	in the claim. Then
	\begin{align*}
		a_{L,\beta}(u-v, \phi)
			& = \int_\Omega \psi \phi \; \dx\lambda + \int_{\partial\Omega} \psi \phi \; \dx\sigma
				- a_{L,\beta}(v,\phi) \\
			& = \int_\Omega \tilde{f}_0 \phi \; \dx\lambda + \sum_{j=1}^N \int_\Omega \tilde{f}_j D_j\phi \; \dx\lambda
				+ \int_{\partial\Omega} \tilde{g} \phi \; \dx\sigma
	\end{align*}
	for all $\phi \in \mathrm{C}^1(\overline{\Omega})$.
	Thus part~\eqref{thm:robin_reg:inv_C} of Theorem~\ref{thm:robin_reg} implies that
	\[
		\|u-v\|_{\mathrm{C}^{0,\gamma}(\Omega)} < c(N+2)\tilde{\eps}
	\]
	for some constant $c$ that does not depend on $v$, $\psi$, $\eps$ or $\tilde{\eps}$.
	If we pick $\tilde{\eps}$ small enough such that $c(N+2)\tilde{\eps} < \eps$,
	the claim follows.
\end{proof}

\begin{theorem}\label{thm:wentzell_semigroup}
	Assume $b_j \in W^{1,\infty}(\Omega)$ for all $j=1,\dots,N$.
	Then the restriction of $(T(t))_{t \ge 0}$ to
	$\mathscr{C} \coloneqq \{ (u,u|_{\partial\Omega}) : u \in \mathrm{C}(\overline{\Omega}) \}$
	is a positive, compact $\mathrm{C}_0$-semigroup.
\end{theorem}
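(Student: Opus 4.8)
The plan is to mirror the structure of the proof of Theorem~\ref{thm:robin_semigroup}, working now in the Hilbert space $\mathcal{H} = L^2(\Omega) \oplus L^2(\partial\Omega)$ and its subspace $\mathscr{C}$, and relying on the extra hypothesis $b_j \in W^{1,\infty}(\Omega)$ only through Proposition~\ref{prop:submarkovian}. First I would fix $\omega \in \mathds{R}$ large enough that $a_{L,\beta}^\omega$ is coercive and that $(\e^{-\omega t}T(t))_{t\ge 0}$ is submarkovian (possible by Proposition~\ref{prop:submarkovian}, possibly enlarging $\omega$); then $\lambda \coloneqq -\omega \in \rho(A)$. Quasi-submarkovianity implies in particular that $(T(t))_{t\ge0}$ extrapolates consistently to $L^\infty(\Omega)\oplus L^\infty(\partial\Omega)$ as a holomorphic semigroup in the sense of~\cite[Definition~3.7.1]{ABHN01}, whose generator is the part of $A$ in that space. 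The space $\mathscr{C}$ sits inside $L^\infty(\Omega)\oplus L^\infty(\partial\Omega)$ as a closed subspace.

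Next I would show that $T(t)$ leaves $\mathscr{C}$ invariant and acts compactly there. The key observation is that a pair $(u, u|_{\partial\Omega}) \in \mathcal{V}$ with $u \in \mathrm{C}^{0,\gamma}(\overline{\Omega})$ automatically lies in $\mathscr{C}$, since the trace of a function continuous up to the boundary is its restriction. By Theorem~\ref{thm:robin_reg}\eqref{thm:robin_reg:inv_C}, there is $m \in \mathds{N}$ and $\gamma > 0$ such that $R(\lambda,A)^m$ maps $\mathcal{H}$ into (pairs whose first component lies in) $\mathrm{C}^{0,\gamma}(\Omega)$; here one must check that the second component of $R(\lambda,A)^m(f,h)$ is indeed the trace of the first — this follows from the definition of the form domain $\mathcal{V}$ together with the fact that applying the resolvent once already lands in $\mathcal{V}$. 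Holomorphy of the semigroup then gives $T(t)\mathcal{H} \subset \mathcal{V} \cap (\mathrm{C}^{0,\gamma}(\overline\Omega) \times \cdot)$ for $t > 0$, so $T(t)$ maps $\mathcal{H}$ boundedly into $\mathscr{C}$; in particular $\mathscr{C}$ is invariant and, factoring the identity through the compact embedding $\mathrm{C}^{0,\gamma}(\overline\Omega) \hookrightarrow \mathrm{C}(\overline\Omega)$, each $T(t)$ is compact on $\mathscr{C}$. Positivity on $\mathscr{C}$ is inherited from positivity on $L^2$.

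Finally, to upgrade from a semigroup of bounded operators on $\mathscr{C}$ to a $\mathrm{C}_0$-semigroup, I would invoke~\cite[Proposition~3.7.4 and Remark~3.7.13]{ABHN01} exactly as in Theorem~\ref{thm:robin_semigroup}: it suffices to know that the part of $A$ in $\mathscr{C}$ is densely defined. This is where Lemma~\ref{lem:wentzell_dense} enters — it was tailored to produce, for any $v \in \mathrm{C}^\infty(\overline\Omega)$ and $\eps > 0$, an element $\psi \in \mathrm{C}^\infty(\overline\Omega)$ whose associated solution $u$ of~\eqref{eq:robin_problem} with $f_0 = g = \psi$ satisfies $\|u - v\|_{\mathrm{C}^{0,\gamma}} < \eps$; since such a $u$ lies in the domain of the part of $A+\omega$ in $\mathscr{C}$ (with $(A+\omega)(u,u|_{\partial\Omega}) = (\psi,\psi|_{\partial\Omega}) \in \mathscr{C}$) and $\mathrm{C}^\infty(\overline\Omega)$ is dense in $\mathrm{C}(\overline\Omega)$, the domain is dense. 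Hence the part of $A$ in $\mathscr{C}$ generates a $\mathrm{C}_0$-semigroup which is the restriction of $(T(t))_{t\ge0}$, and it is positive and compact by the previous step.

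I expect the main obstacle to be the bookkeeping in the second step: verifying carefully that the resolvent (and its powers) applied to an element of $\mathcal{H}$ really produces a pair of the form $(u, u|_{\partial\Omega})$ with $u$ Hölder continuous — i.e., that the abstract resolvent on $\mathcal{H}$ is compatible with the weak formulation~\eqref{eq:robin_problem} to which Theorem~\ref{thm:robin_reg} applies — and then threading this through the holomorphy argument so that $T(t)$ genuinely maps into $\mathscr{C}$ rather than merely into $\mathcal{V}$. Once that identification is clean, the compactness and the $\mathrm{C}_0$-property follow by the same soft arguments as in the Robin case.
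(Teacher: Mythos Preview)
Your proposal follows essentially the same architecture as the paper's proof, and the steps concerning invariance of $\mathscr{C}$, compactness, positivity, and density of the domain (via Lemma~\ref{lem:wentzell_dense}) are all correct and match the paper.

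There is, however, one gap. You claim that quasi-submarkovianity (Proposition~\ref{prop:submarkovian}) implies that $(T(t))_{t\ge 0}$ extrapolates to $L^\infty(\Omega)\oplus L^\infty(\partial\Omega)$ as a \emph{holomorphic} semigroup in the sense of \cite[Definition~3.7.1]{ABHN01}, and then plan to invoke \cite[Proposition~3.7.4 and Remark~3.7.13]{ABHN01} exactly as in the Robin case. But in the Robin setting this holomorphy on $L^\infty$ was obtained from Gaussian kernel estimates (\cite[Corollary~6.1]{Daners00} combined with \cite[Theorem~5.4]{AtE97}), not from submarkovianity; quasi-submarkovianity by itself yields only exponential boundedness on $L^\infty$, not analyticity. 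No Gaussian bounds are established for the Wentzell--Robin semigroup in this paper, so your route to strong continuity is not justified as written.

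The paper circumvents this by using Proposition~\ref{prop:submarkovian} only to conclude that the restriction of $(T(t))_{t\ge0}$ to $\mathscr{C}$ is a semigroup in the weaker sense of \cite[Definition~3.2.5]{ABHN01} (i.e., exponentially bounded), and then invokes \cite[Corollary~3.3.11]{ABHN01}, which upgrades such a semigroup to a $\mathrm{C}_0$-semigroup once the generator is densely defined. This is why Theorem~\ref{thm:wentzell_semigroup} asserts only a $\mathrm{C}_0$-semigroup, not a holomorphic one. Your argument is easily repaired by making this substitution; everything else --- including your compactness argument via the embedding $\mathrm{C}^{0,\gamma}(\overline{\Omega}) \hookrightarrow \mathrm{C}(\overline{\Omega})$, which is a legitimate alternative to the paper's factorisation through $\mathcal{H}$ --- goes through.
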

\begin{proof}
	Pick $\omega \ge 0$ large enough such that $a_{L,\beta}^\omega$ and hence in
	particular $\mathfrak{a}_{L,\beta}^\omega$ is coercive.
	Then $\lambda \coloneqq -\omega$ is in $\rho(A)$, where $A$ denotes the generator
	of $(T(t))_{t \ge 0}$.

	Using Theorem~\ref{thm:robin_reg}, one can show as in the proof of
	Lemma~\ref{lem:robin_Lpreg} that there exists $m \in \mathds{N}$ such that
	\[
		R(\lambda,A)^m \mathcal{H}
			\subset \mathscr{C}^{0,\gamma} \coloneqq \left\{ (u,u|_{\partial\Omega}) : u \in \mathrm{C}^{0,\gamma}(\Omega) \right\}.
	\]
	Since $(T(t))_{t \ge 0}$ is analytic, each $T(t)$, $t > 0$, is a bounded operator
	from $\mathcal{H}$ to $\mathscr{C}^{0,\gamma}$.
	In particular, $\mathscr{C}$ is invariant under each $T(t)$, $t > 0$.

	By Proposition~\ref{prop:submarkovian}, the restriction of
	$(T(t))_{t \ge 0}$ to $\mathscr{C}$
	is a semigroup in the sense of~\cite[Definition~3.2.5]{ABHN01}.
	Its generator is the part of $A$ in $\mathscr{C}$.
	Since $\mathrm{C}^\infty(\overline{\Omega})$ is dense in $\mathrm{C}(\overline{\Omega})$,
	the part of $A$ in $\mathscr{C}$ is densely defined by Lemma~\ref{lem:wentzell_dense}.
	Thus, by~\cite[Corollary~3.3.11]{ABHN01}, the restriction of $(T(t))_{t \ge 0}$
	to $\mathscr{C}$ is a $\mathrm{C}_0$-semigroup.

	Since $T(t)$ is positive on $\mathcal{H}$, it is also positive on $\mathscr{C}$.
	Since $\mathcal{V}$ is compactly embedded into $\mathcal{H}$ by the Sobolev
	embedding theorems, $T(t)$ is compact on $\mathcal{H}$ for every $t \ge 0$.
	Compactness of the semigroup on $\mathscr{C}$ follows by factorization through $\mathcal{H}$.
\end{proof}

\begin{remark}
	Typically one identifies the semigroup $(T(t))_{t \ge 0}$ on $\mathscr{C}$
	of Theorem~\ref{thm:wentzell_semigroup}
	via the isometric isomorphism
	\[
		\mathscr{C} \to \mathrm{C}(\overline{\Omega}), \; (u,u|_{\partial\Omega}) \mapsto u
	\]
	with a positive, compact $\mathrm{C}_0$-semigroup on $\mathrm{C}(\overline{\Omega})$
	and calls that one the Wentzell-Robin semigroup.
\end{remark}

In the proof of the preceding theorem we used the regularity assumption
on the coefficients only to ensure that the operator norm of $T(t)$
on $L^\infty(\Omega) \oplus L^\infty(\partial\Omega)$ is bounded for small $t$.
There seems to be no simple argument that assures the boundedness
in the general case. For example, as we have seen in Example~\ref{ex:nonsubmark},
we cannot expect the semigroup to be quasi-contractive.

However, the situation is different for the $L^p$-spaces, $1 < p < \infty$.
By direct estimates, Daners~\cite{Daners00} proved that under rather general regularity assumptions
that the Robin semigroup is quasi-$L^p$-contractive for every $p \in (1,\infty)$.
Although a similar proof still works for the Wentzell-Robin semigroup in the product space
$L^p(\Omega) \oplus L^p(\partial\Omega)$, as the last result in this article
we show how such an estimate can be obtained by reduction to the Robin case,
which extends the result in~\cite{FGGR08}.

\begin{proposition}\label{prop:lp_contr}
	There exists a $\delta_0$ that depends only on the coefficients of the
	differential operator $L$ such that for every $p \in (1,\infty)$ we have
	\[
		\|T(t)u\|_p \le \e^{\omega_p t} \|u\|_p
	\]
	for all $t \ge 0$ and all $u \in \mathcal{H}$ that are in
	$L^p(\Omega) \oplus L^p(\partial\Omega)$. Here we write
	$\omega_p$ for the quantity $\max\{p,p'\} \delta_0$, where
	$p'$ denotes the dual exponent to $p$.
\end{proposition}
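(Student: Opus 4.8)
The plan is to reduce the assertion, via the form characterisation of $L^p$-contractivity, to the analogous statement for the Robin semigroup, which is already available through Daners~\cite{Daners00}. The key point is that the Wentzell form $\mathfrak{a}_{L,\beta}$ restricted to its form domain $\mathcal{V}$ coincides with the Robin form $a_{L,\beta}$ on $H^1(\Omega)$, while under the natural identification $\mathcal{H}=L^2(\overline{\Omega},\mu)$ with $\mu\coloneqq\lambda+\sigma$ (i.e.\ Lebesgue measure on $\Omega$ plus surface measure on $\partial\Omega$) the norm of $L^p(\Omega)\oplus L^p(\partial\Omega)$ is $\|\cdot\|_{L^p(\mu)}$ and dominates $\|\cdot\|_{L^p(\Omega)}$. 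Hence the form inequality behind Daners' theorem transfers verbatim, with the same rate; in fact the boundary term that is the hard part of the Robin estimate becomes harmless in the Wentzell setting.

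First I would treat $p\ge2$, recovering $p\in(1,2)$ at the end. Recall the criterion (see~\cite[Ch.~2--3]{Ouhabaz05}): if $\mathfrak{b}$ is a densely defined, continuous, accretive, closed form on some $L^2(\nu)$, $(S(t))_{t\ge0}$ the associated $\mathrm{C}_0$-semigroup and $\omega\ge0$, then $\|\e^{-\omega t}S(t)v\|_{L^p(\nu)}\le\|v\|_{L^p(\nu)}$ for all $v\in L^2(\nu)\cap L^p(\nu)$ provided that $w\mapsto|w|^{p-2}\overline{w}$ maps the bounded elements of $D(\mathfrak{b})$ into $D(\mathfrak{b})$ and
\[
	\Real\,\mathfrak{b}\bigl(v,|v|^{p-2}\overline{v}\bigr)+\omega\|v\|_{L^p(\nu)}^p\ge0
\]
for all such $v$. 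Applied to the Robin semigroup on $L^2(\Omega)$, Daners' quasi-$L^p$-contractivity with rate $\omega_p=\max\{p,p'\}\delta_0$, where $\delta_0\ge0$ depends only on the coefficients of $L$, says exactly that
\[
	\Real\,a_{L,\beta}\bigl(u,|u|^{p-2}\overline{u}\bigr)+\omega_p\int_\Omega|u|^p\,\dx\lambda\ge0
\]
for every $u\in H^1(\Omega)\cap L^\infty(\Omega)$.

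Next I would transfer this. Under the above identification $(u,u|_{\partial\Omega})\in\mathcal{V}$ exactly when $u\in H^1(\Omega)$, and for $p\ge2$ and $u\in H^1(\Omega)\cap L^\infty(\Omega)$ one has $(|u|^{p-2}\overline{u},|u|^{p-2}\overline{u}|_{\partial\Omega})\in\mathcal{V}$, because the trace of $|u|^{p-2}\overline{u}$ is the corresponding nonlinear function of the trace $u|_{\partial\Omega}$, as one sees by approximating $u$ in $H^1(\Omega)$ by bounded functions from $\mathrm{C}^\infty(\overline{\Omega})$. By the very definition of $\mathfrak{a}_{L,\beta}$, the form value $\mathfrak{a}_{L,\beta}\bigl((u,u|_{\partial\Omega}),(|u|^{p-2}\overline{u},|u|^{p-2}\overline{u}|_{\partial\Omega})\bigr)$ equals $a_{L,\beta}\bigl(u,|u|^{p-2}\overline{u}\bigr)$, while $\|(u,u|_{\partial\Omega})\|_{L^p(\mu)}^p=\int_\Omega|u|^p\,\dx\lambda+\int_{\partial\Omega}|u|^p\,\dx\sigma\ge\int_\Omega|u|^p\,\dx\lambda$. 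Since $\omega_p\ge0$, the Robin form inequality therefore yields
\[
	\Real\,\mathfrak{a}_{L,\beta}\bigl((u,u|_{\partial\Omega}),(|u|^{p-2}\overline{u},|u|^{p-2}\overline{u}|_{\partial\Omega})\bigr)+\omega_p\|(u,u|_{\partial\Omega})\|_{L^p(\mu)}^p\ge0,
\]
so the criterion gives $\|\e^{-\omega_p t}T(t)u\|_{L^p(\mu)}\le\|u\|_{L^p(\mu)}$, which is the assertion for $p\ge2$. For $p\in(1,2)$ one dualises: $T(t)^\ast$ is the Wentzell-Robin semigroup associated with the formal adjoint $L^\ast$, which has the same ellipticity constant and the same $L^\infty$-bounds on its coefficients, hence the same admissible $\delta_0$; since $\|T(t)\|_{\mathscr{L}(L^p(\mu))}=\|T(t)^\ast\|_{\mathscr{L}(L^{p'}(\mu))}$ and $\max\{p,p'\}$ is unchanged under $p\leftrightarrow p'$, this reduces to the already proven case $p'>2$ (alternatively one may invoke~\cite{Daners00} for $p<2$ directly).

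The obstacle is bookkeeping rather than substance: one must check that the precise hypotheses of the $L^p$-contractivity criterion one cites — invariance of the form domain under $w\mapsto|w|^{p-2}\overline{w}$, together with the differentiation and approximation arguments passing from the form inequality to the contraction estimate — apply in the same way to the Robin form on $H^1(\Omega)$ and to the Wentzell form on $\mathcal{V}$, and that the range of $p$ and the exact value of $\omega_p$ in Daners' theorem are as used here. If one prefers not to rely on the precise statement of~\cite{Daners00}, the argument goes through with the form inequality re-derived by hand: expand $\Real\,a_{L,\beta}(u,|u|^{p-2}\overline{u})$ and absorb the first-order terms into $\alpha(p-1)\int_\Omega|u|^{p-2}|\nabla u|^2\,\dx\lambda$ by Young's inequality. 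The decisive simplification over the genuine Robin case is that the boundary contribution $\int_{\partial\Omega}\beta|u|^p\,\dx\sigma$ need not be absorbed into the gradient term (which for the Robin semigroup requires a trace inequality), but is controlled outright by $\|\beta\|_{L^\infty(\partial\Omega)}\|(u,u|_{\partial\Omega})\|_{L^p(\mu)}^p$; this is the precise sense in which the Wentzell estimate ``reduces to the Robin case''.
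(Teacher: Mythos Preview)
Your proof is correct and rests on the same reduction as the paper's: the Wentzell form restricted to $\mathcal{V}$ coincides with the Robin form on $H^1(\Omega)$, and the $L^p(\mu)$-norm dominates the $L^p(\Omega)$-norm, so Daners' Robin estimate transfers with the same rate $\omega_p$. The technical packaging, however, differs. The paper uses the closed-convex-set invariance criterion \cite[Theorem~2.2]{Ouhabaz05}: it needs the orthogonal projection $\mathcal{P}$ onto the $L^p$-unit ball to map $\mathcal{V}$ into $\mathcal{V}$, and obtains this \emph{indirectly} by noting that the Laplacian Wentzell semigroup is quasi-submarkovian (Proposition~\ref{prop:submarkovian}), hence already leaves that ball invariant, so the projection condition holds for free; the form inequality $\mathfrak{a}_{L,\beta}^{\omega_p}(u,u-\mathcal{P}u)\ge0$ then follows from the Robin one via the identity $\mathcal{P}(u,u|_{\partial\Omega})=(Pu,(Pu)|_{\partial\Omega})$, and all $p\in(1,\infty)$ are treated at once. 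You instead invoke the $|u|^{p-2}u$ form criterion, verify the form-domain invariance directly (the nonlinearity is Lipschitz on bounded sets and commutes with the trace), and dualise for $p<2$. Your route is more elementary in that it avoids the detour through an auxiliary semigroup, at the cost of splitting into the two ranges of $p$; the paper's route is uniform in $p$ but conceals the form-domain verification behind Proposition~\ref{prop:submarkovian}.
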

\begin{proof}
	Let $p \in (1,\infty)$.
	Denote by $\mathcal{B}$ the intersection of $\mathcal{H}$ and the unit ball of
	$L^p \coloneqq L^p(\Omega) \oplus L^p(\partial\Omega)$.
	By Fatou's lemma, $\mathcal{B}$ is closed in $\mathcal{H}$, and
	$\mathcal{B}$ is convex.
	Let $\mathcal{P}$ denote the orthogonal projection of $\mathcal{H}$ onto $\mathcal{B}$.

	Since in the special case $L = -\Delta$ and $\beta = 0$
	the corresponding semigroup $(S(t))_{t \ge 0}$ is quasi-submarkovian by
	Proposition~\ref{prop:submarkovian}, we obtain from the
	Riesz-Thorin interpolation theorem that there exists
	$\omega > 0$ such that $(\e^{-\omega t}S(t))_{t \ge 0}$
	leaves $\mathcal{B}$ invariant. Thus
	\begin{equation}\label{eq:lp_contr:form_domain}
		\mathcal{P}\mathcal{V} \subset \mathcal{V}
	\end{equation}
	by~\cite[Theorem~2.2]{Ouhabaz05}, since $\mathcal{V}$
	is the form domain of $\mathfrak{a}_{-\Delta,0}^\omega$.

	Let $(R(t))_{t \ge 0}$ denote the Robin semigroup for the form $a_{L,\beta}$.
	Let $B$ denote the intersection of $L^2(\Omega)$ and the closed unit ball in $L^p(\Omega)$,
	and let $P$ be the orthogonal projection of $L^2(\Omega)$ onto $B$.
	By~\cite[Theorem~5.1]{Daners00}, $(\e^{-\omega_p t}R(t))_{t \ge 0}$ maps $B$ into itself.
	Thus
	\begin{equation}\label{eq:lp_contr:robin_form}
		a_{L,\beta}^{\omega_p}(u,u-Pu) \ge 0
		\text{ for all } u \in V \coloneqq H^1(\Omega)
	\end{equation}
	by~\cite[Theorem~2.2]{Ouhabaz05}.
	Since we already know that $\mathcal{P}$ maps $\mathcal{V}$ into $\mathcal{V}$,
	it is easy to see that
	\begin{equation}\label{eq:lp_contr:proj_rel}
		\mathcal{P}(u, u|_{\partial\Omega}) = (Pu, (Pu)|_{\partial\Omega})
		\text{ for all } u \in H^1(\Omega).
	\end{equation}
	By definition of $\mathfrak{a}_{L,\beta}^{\omega_p}$, it follows
	from~\eqref{eq:lp_contr:robin_form} and~\eqref{eq:lp_contr:proj_rel} that
	\begin{equation}\label{eq:lp_contr:wentzell_form}
		\mathfrak{a}_{L,\beta}^{\omega_p}\bigl( (u,u|_{\partial\Omega}), (I-\mathcal{P})(u, u|_{\partial\Omega}) \bigr)
			\ge 0 \text{ for all } u \in H^1(\Omega).
	\end{equation}

	Again by Theorem~\cite[Theorem~2.2]{Ouhabaz05} it follows
	from~\eqref{eq:lp_contr:form_domain} and~\eqref{eq:lp_contr:wentzell_form}
	that $\mathcal{B}$ is invariant under the semigroup $(\e^{-\omega_p t}T(t))_{t \ge 0}$.
	This is precisely the statement we wanted to prove.
\end{proof}

%\section{Offene Fragen}
%\begin{enumerate}[(1)]
%\item
%	Lipschitz-Koeffizienten liefern bessere Regularit\"at,
%	wohl $\mathrm{C}^{1,\gamma}(\Omega)$.
%\item
%	Halbgruppe in $\mathrm{C}^{0,\gamma}(\Omega)$?
%	M\"oglicherweise durch pr\"azise Resolventenabsch\"atzungen
%	erreichbar, indem man den entsprechenden Koeffizienten in
%	der Absch\"atzung mitverfolgt.
%	Alternativ w\"urde auch reichen, dass die Halbgruppe auf
%	diesem Raum bei $t=0$ beschr\"ankt bleibt, was f\"ur Kerne
%	der Form $k(t,x,y) = \tilde{k}(t,x-y)$ beispielsweise
%	leicht einzusehen w\"are.
%\item
%	Wentzell ohne Lipschitzbedingung an Drift; entweder direkt mit Resolventenabsch\"atzungen oder
%	mit einer Kernabsch\"atzung. Man braucht nur noch, dass die Halbgruppe
%	nach $L^\infty$ extrapoliert, um Generierung auf den stetigen
%	Funktionen zu bekommen.
%	Analytizit\"at ist f\"ur Spezialf\"alle bekannt, siehe Engel-Fragnelli.
%\item
%	Holomorphie auch f\"ur Wentzell? M\"oglicherweise durch Kernabsch\"atzungen
%	oder durch Resolventenabsch\"atzungen.
%	Der Engel-Ansatz k\"onnte ebenfalls funktionieren.
%\item
%	Quasilineare Gleichungen
%\item
%	komplexe Koeffizienten (St"orungstheorie?)
%\item
%	explizitere Konstanten, unbeschr"ankte Koeffizienten (auch $\beta$)
%\item
%	Kernabschaetzungen im Produktraum (Wentzell)
%\item
%	Gauss'sche Absch"atzungen der H"olderregularitaet des Kerns
%\item
%	H\"olderrand und/oder $W^{1,p}$-Rand?
%\item
%	Andere Ma"se auf dem Rand, somit anderen Wentzell-Randbedingungen
%\item
%	inhomogene parabolische Probleme
%\end{enumerate}

\bibliographystyle{amsalpha}
\bibliography{boundarycont}

\end{document}